\documentclass[12pt, a4paper, reqno]{amsart}
\usepackage[svgnames]{xcolor}
\usepackage[centering, heightrounded]{geometry}

\usepackage{amsmath, amsthm}
\usepackage{amssymb}
\usepackage{mathrsfs}%
\usepackage{braket}%

\usepackage{secdot}%

\usepackage{enumitem}
\setlist[enumerate]{topsep=2pt, parsep=0pt, partopsep=0pt, itemsep=3pt, leftmargin=3em, label=\rm{(\roman*)}}

\usepackage{cancel}

\usepackage{accents}

\makeatletter
\def\widebar{\accentset{{\cc@style\underline{\mskip8mu}}}}
\makeatother



\numberwithin{equation}{section}

\newtheorem{theorem}{Theorem}[section]
\newtheorem{lemma}[theorem]{Lemma}

\newtheorem{corollary}[theorem]{Corollary}
\newtheorem*{theorem*}{Theorem}
\newtheorem*{lemma*}{Lemma}

\newtheorem*{assumption*}{Assumption}

\theoremstyle{definition}
\newtheorem{definition}[theorem]{Definition}
\newtheorem*{remark*}{Remark}

\theoremstyle{remark}
\newtheorem{remark}[theorem]{Remark}


\let\oldl\l 

\renewcommand{\l}{\left}
\renewcommand{\r}{\right}
\newcommand{\eps}{\varepsilon}

\newcommand{\R}{{\mathbb R}}
\newcommand{\C}{{\mathbb C}}

\newcommand{\T}{{\mathbb T}}

\renewcommand{\Re}{\operatorname{Re}}
\renewcommand{\Im}{\operatorname{Im}}


\newcommand{\pt}{\partial}
\newcommand{\cleq}{\lesssim}

\newcommand{\til}{\widetilde}

\def\tbra[#1,#2]{\left\langle #1 , #2\right\rangle} 
\def\rbra[#1,#2]{\left( #1 , #2 \right)} 

\newcommand{\ce}{\mathrel{\mathop:}=}

\newcommand{\supp}{\operatorname{supp}} 

\def\norm[#1]{\left\Vert #1 \right\Vert}
\def\abs[#1]{\left\vert #1 \right\vert}



\newcommand{\scX}{{\mathscr X}}
\newcommand{\scY}{{\mathscr Y}}
\newcommand{\cD}{{\mathcal D}}


\addtolength{\textwidth}{0.7cm}

\begin{document}

\title[Uniqueness for the logarithmic Schr\"odinger equation]{Uniqueness of solutions for the logarithmic Schr\"odinger equation}

\author[M. Hayashi]{Masayuki Hayashi}
\address{Graduate School of Human and Environmental Studies,
Kyoto University, Kyoto 606-8501, Japan
\newline\indent
Waseda Research Institute for Science and Engineering, Waseda University, Tokyo 169-8555, Japan
}
 \email{hayashi.masayuki.3m@kyoto-u.ac.jp}

\date{\today}

\begin{abstract}
We consider the Cauchy problem for the logarithmic Schr\"odinger equation and prove uniqueness of weak $H^s(\R^d)$ solutions for $s\in(0,1)$, which improves on the previous uniqueness result in $H^1(\R^d)$. The proof is achieved by combining a nontrivial use of integral equations, local smoothing estimates, and quantitative estimates of the sublinear effect of the nonlinearity, based on the localization argument. We also study uniqueness on the torus and uniqueness of the equation perturbed by pure power nonlinearities.
\end{abstract}

\maketitle



\section{Introduction}

\subsection{Setting of the problem}
\label{sec:1.1}
We consider the logarithmic Schr\"odinger equation
\begin{align}
\label{eq:1.1}
\left\{
\begin{aligned}
&i\pt_t u+\Delta u+\lambda u\log (|u|^2)=0,
\\
& u_{\mid t=0} =u_0,
\end{aligned}
\quad(t,x)\in\R\times\R^d,~\lambda\in\R\setminus\{0\}.
\r.
\end{align}
The charge $\norm[u]_{L^2}^2$ and the energy
\begin{align*}
E (u)=\frac{1}{2}\int|\nabla u|^2 -\frac{\lambda}{2}\int |u|^2\l( \log(|u|^2) -1\r)
\end{align*}
are formally conserved by the flow of \eqref{eq:1.1}, and the equation is formally rewritten by the Hamiltonian form $i\pt_t u=E'(u)$. 
This model was first introduced in \cite{BM76} and later found to be suitable for describing various physical phenomena \cite{BM79, H85, KEB00, Z10}.
It is known that the properties of the solution of \eqref{eq:1.1} rather differ depending on the sign of $\lambda$. When $\lambda>0$, the equation has a non-dispersive structure and is known to have explicit standing waves called Gaussons, and  multi-Gaussons \cite{C83, A16, DMS14, BCST19nm, F21}.
On the other hand, when $\lambda<0$ it was shown in \cite{CG18} that \eqref{eq:1.1} has a universal dispersive structure. The non-dispersive/dispersive structure of \eqref{eq:1.1} is significantly different from that of power-type nonlinear Schr\"odinger equations, and it can be seen that nonlinear effects are strongly evident. More complete references can be found in the recent survey \cite{C22}. 

The main difficulty in the Cauchy problem for \eqref{eq:1.1} is that the nonlinearity breaks the local Lipschitz continuity. The nonlinearity has a sublinear effect due to the singularity of the logarithmic function at the origin, which yields the lack of Lipschitz continuity. To consider solutions to \eqref{eq:1.1} in the Sobolev space $H^s$, it is necessary to consider the differential equation in a local sense.  First, we will give the definition of $H^s$ solutions of \eqref{eq:1.1}.
\begin{definition}
\label{def:1.1}
Let $I\subset\R$ be an interval with $0\in I$ and let $s\ge0$. Given $u_0\in H^s(\R^d)$. 
We say that a function $u$ is a weak $H^s$ solution of \eqref{eq:1.1} on $I$ 
if $u\in L^\infty(I,H^s(\R^d))$ such that (i) $u$ satisfies 
\begin{align}
\label{eq:1.2}
i\pt_t u+\Delta u+\lambda u\log (|u|^2)=0\quad\text{in}~\cD'(I\times \R^d),
\end{align}
i.e., in the sense of distributions on $I\times \R^d$ and (ii) $u_{\mid t=0}=u_0$. 
We say that a function $u$ is a strong $H^s$ solution of \eqref{eq:1.1} on $I$ if $u\in C(I,H^s(\R^d))$ such that (i) and (ii) hold.
\end{definition}
If $u\in L^\infty(I,H^s(\R^d))$ with $s\in[0,1]$ satisfies \eqref{eq:1.2}, then $u$ satisfies 
\begin{align}
\label{eq:1.3}
i\pt_t u+\Delta u +\lambda u\log(|u|^2)=0\quad\text{in}~H^{s-2}(B_R),
\end{align}
for almost all $t\in I$ and any $R>0$, where $B_R$ is an open ball of radius $R$ with center at the origin. We note from this fact that $u_{\mid t=0}=u_0$ makes sense as an element of $H^s(\R^d)$ (see Lemma \ref{lem:2.2}). 
We also note that the nonlinear term does not belong to $H^{s-2}(\R^d)$ in general, so $B_R$ in \eqref{eq:1.3} cannot be replaced by $\R^d$.

Despite the lack of Lipschitz continuity of the nonlinearity, we can expect that the equation \eqref{eq:1.1} has uniqueness properties. Cazenave and Haraux \cite{CH80} introduced the inequality
\begin{align}
\label{CH}
\bigl| \Im\bigl[ (\overline{z-w})( z\log|z|-w\log|w| ) \bigr]\bigr|
\leq |z-w|^2\quad\text{for all}~z,w\in\C,
\end{align}
and constructed a unique strong solution\footnote{Although we do not state it explicitly, all of the solutions constructed in previous works have been global solutions in time.} 
in the energy space 
\begin{align*}
\l\{ f\in H^1(\R^d) : |f|^2\log(|f|^2)\in L^1(\R^d) \r\}
\end{align*}
for $\lambda>0$. Regarding uniqueness, they proved a stronger claim:
\begin{align}
\label{eq:1.5}
\norm[u(t)-v(t)]_{L^2}
\leq
e^{2|\lambda t|} \norm[ u(0)-v(0)]_{L^2}, \quad t\in\R
\end{align}
for any weak $H^1$ solutions $u$ and $v$ to \eqref{eq:1.1}. These results were recently revisited in \cite{HO24}, and strong solutions were constructed in $H^1$ ($\lambda\neq0$), the energy space ($\lambda\neq0$), and the $H^2$ energy space ($\lambda>0$), respectively. We refer the reader to \cite{GLN10, CG18} for the Cauchy problem in weighted Sobolev spaces, which are however narrower than the energy space.

For low regularity solutions, we have the following result:
\begin{theorem}[{\cite{CHO24}}]
\label{thm:1.2}
The flow map $\Phi:u_0\mapsto u$ for \eqref{eq:1.1} is uniquely extended from $H^1$ to $L^2$. If $u_0\in L^2(\R^d)$, $\Phi(u_0)\in C(\R,L^2(\R^d))$ is a strong solution to \eqref{eq:1.1}, and $\Phi$ is Lipschitz continuous:
\begin{align*}
\norm[ \Phi(u_0)(t)-\Phi(v_0)(t)]_{L^2}
\leq
e^{2|\lambda t|} \norm[ u_0- v_0]_{L^2} 
\end{align*}
for any $u_0,v_0\in L^2(\R^d)$ and all $t\in\R$. If in addition $u_0\in H^s(\R^d)$ for $s\in(0,1)$, then $\Phi(u_0)\in C(\R, H^s(\R^d))$.
\end{theorem}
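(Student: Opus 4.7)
The plan is to bootstrap the $L^2$-Lipschitz estimate \eqref{eq:1.5} from weak $H^1$ solutions to $L^2$ data by density, and then to derive the $H^s$-regularity statement essentially for free by exploiting the translation invariance of \eqref{eq:1.1}.

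For the $L^2$ part, given $u_0\in L^2(\R^d)$ I would pick a sequence $u_{0,n}\in H^1(\R^d)$ with $u_{0,n}\to u_0$ in $L^2$, set $u_n\ce\Phi(u_{0,n})$, and use \eqref{eq:1.5} to conclude that $\{u_n\}$ is Cauchy in $C_{\mathrm{loc}}(\R,L^2(\R^d))$; the limit $u$ then lies in $C(\R,L^2(\R^d))$ and is declared to be $\Phi(u_0)$. Independence of the approximating sequence, global existence and the Lipschitz bound all follow by passing to the limit in \eqref{eq:1.5}. The only nontrivial step is verifying \eqref{eq:1.2} in $\cD'$: after extracting an a.e.\ convergent subsequence, continuity of $z\mapsto z\log|z|^2$ gives pointwise a.e.\ convergence of the nonlinearity, which I would upgrade to distributional convergence against any test function via Vitali's theorem, using the sublinear bound $|z\log|z|^2|\lesssim_\delta|z|^{1-\delta}+|z|^{1+\delta}$ and the uniform $L^\infty(L^2)$ bound on $u_n$, splitting $\{|u_n|\le M\}$ versus $\{|u_n|>M\}$ and controlling the latter by Chebyshev.

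For the $H^s$-part with $s\in(0,1)$, the crucial observation is that \eqref{eq:1.1} is translation-invariant, so the $L^2$-uniqueness just established forces
\begin{align*}
\Phi(u_0)(t,\cdot+h)=\Phi\bigl(u_0(\cdot+h)\bigr)(t,\cdot),\quad h\in\R^d.
\end{align*}
Applying the newly-extended $L^2$-Lipschitz bound to the pair $u_0(\cdot+h),u_0$, then squaring, multiplying by $|h|^{-d-2s}$ and integrating in $h$, the Gagliardo characterization of $\dot H^s$ yields
\begin{align*}
\norm[\Phi(u_0)(t)]_{\dot H^s}\le e^{2|\lambda t|}\norm[u_0]_{\dot H^s},
\end{align*}
so $\Phi(u_0)\in L^\infty_{\mathrm{loc}}(\R,H^s(\R^d))$ whenever $u_0\in H^s$. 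For strong continuity in $H^s$ I would apply the same translation argument starting from an arbitrary reference time $t_0$, first forward and then backward, to produce matching limsup/liminf bounds on the Gagliardo seminorm of $\Phi(u_0)(t)$ as $t\to t_0$, hence continuity of the $H^s$-norm; weak $H^s$-continuity (automatic from $C(\R,L^2)$-continuity plus the $L^\infty_{\mathrm{loc}}(H^s)$-bound) together with norm convergence then upgrades to strong continuity in the Hilbert space $H^s$.

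The main obstacle I anticipate is the distributional passage to the limit for the nonlinearity in the first step: because $|\log|z||$ blows up both near $0$ and at infinity, no pure polynomial domination is available, and one genuinely needs the interplay of pointwise a.e.\ convergence with Vitali's theorem. By contrast, the propagation of $H^s$-regularity---which at first glance might appear to be the delicate part---falls out quite cleanly once the $L^2$-Lipschitz bound is in hand, thanks to translation invariance.
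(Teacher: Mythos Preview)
The paper does not give its own proof of Theorem~\ref{thm:1.2}; the result is quoted from \cite{CHO24}. So there is no in-paper argument to compare against, only the brief description the authors give of that reference.

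Your proposal is correct. The $L^2$ part---approximate $u_0$ by $H^1$ data, use \eqref{eq:1.5} to get a Cauchy sequence in $C_{\mathrm{loc}}(\R,L^2)$, and pass the nonlinearity to the limit via Vitali on compact sets---is exactly the mechanism the paper attributes to \cite{CHO24} (``the flow map was extended through $H^1$ solutions''). One small wording issue: when you invoke ``the $L^2$-uniqueness just established'' to get $\Phi(u_0)(t,\cdot+h)=\Phi(u_0(\cdot+h))(t)$, what you are really using is the \emph{well-definedness and $L^2$-continuity} of the extended map $\Phi$, not unconditional uniqueness in $L^2$ (which the paper stresses is still open). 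The identity holds for $H^1$ data by \eqref{eq:1.5} and translation invariance, and then persists in the $L^2$ limit by continuity of $\Phi$; you should phrase it that way.

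For the $H^s$ regularity, your route differs from what the paper says \cite{CHO24} does. The paper reports that in \cite{CHO24} ``a~priori estimates of $H^s$ were uniformly obtained for approximate equations'', i.e.\ one regularizes the logarithmic nonlinearity, proves uniform-in-regularization $H^s$ bounds, and passes to the limit. Your argument bypasses regularization entirely: once the $L^2$-Lipschitz bound is available, translation invariance plus the Gagliardo characterization of $\dot H^s$ immediately transports the $H^s$ seminorm, and the forward/backward semigroup trick gives norm continuity. This is shorter and more transparent, and it makes clear that propagation of $H^s$ is a formal consequence of the $L^2$ Lipschitz estimate together with translation invariance, with no PDE-specific input beyond that.
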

For $H^s$ solutions with $s\in(0,1)$, a priori estimates of $H^s$ were uniformly obtained for approximate equations, but the flow map was extended through $H^1$ solutions.
Therefore, the uniqueness in Theorem \ref{thm:1.2} depends on the method of constructing the solution. Currently known construction methods for low regularity solutions to \eqref{eq:1.1} can be summarized into the following three:
\begin{itemize}
\setlength{\itemsep}{2pt} 
\item Unique extension of the flow map from $H^1$ to $H^s$ for $s\in[0,1)$ \cite{CHO24}

\item Maximal monotone theory \cite{CH80, B73}

\item Compactness methods

\end{itemize}
The first two methods yield uniqueness claims, but as noted above for the first, and similarly for the second, the uniqueness depends on how the solution is constructed. We do not know from previous works whether the solutions constructed by each method coincide. In compactness methods, if we consider different approximation equations, solutions obtained by limiting procedures may be different. In fact, in previous works \cite{CH80, GLN10, CG18, H18, HO24}, the methods of approximating nonlinearities were slightly different. Overall, uniqueness of low regularity solutions was left unclear.
\subsection{Main results}

In this paper we prove strong uniqueness properties to logarithmic Schr\"odinger equations. 
Our first main result is the following:
\begin{theorem}
\label{thm:1.3}
Let $T>0$ and let $s\in(0,1)$. For any $u_0\in H^s(\R^d)$, weak $H^s$ solutions of \eqref{eq:1.1} on $[0,T]$ are unique.
\end{theorem}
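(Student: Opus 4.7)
The plan is to reduce the problem to the standard $L^2$ argument based on the Cazenave--Haraux inequality \eqref{CH}, carried out rigorously at regularity $H^s$ with $s<1$. Let $u,v\in L^\infty([0,T],H^s(\R^d))$ be two weak $H^s$ solutions of \eqref{eq:1.1} with $u(0)=v(0)=u_0$, and set $w:=u-v$. Formally, pairing the equation $i\pt_t w+\Delta w+\lambda F=0$, with $F:=u\log|u|^2-v\log|v|^2$, against $\bar w$ in $L^2(\R^d)$ and taking imaginary parts yields
\begin{equation*}
\frac{d}{dt}\norm[w(t)]_{L^2}^2 \;=\; 2\lambda \Im \int \bar w\, F\, dx \;\leq\; 4|\lambda|\,\norm[w(t)]_{L^2}^2,
\end{equation*}
by \eqref{CH} applied to $u\log|u|^2=2u\log|u|$; Gronwall together with $w(0)=0$ then forces $w\equiv 0$. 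The entire task is to legitimize this $L^2$ identity when $w$ only lies in $L^\infty_t H^s_x$ (too rough to pair $\Delta w$ against $\bar w$ globally) and when $F$, due to the slow decay of $\log|u|^2$ at spatial infinity, fails to belong to $H^{s-2}(\R^d)$ — so the equation itself only holds in the local sense of \eqref{eq:1.3}.

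I would proceed by a two-scale regularization. First, localize with a cutoff $\chi_R\in C_c^\infty(\R^d)$ equal to $1$ on $B_R$: then $\chi_R w$ is globally in $L^\infty_t L^2_x$, vanishes at $t=0$, and satisfies
\begin{equation*}
i\pt_t(\chi_R w)+\Delta(\chi_R w) \;=\; -\lambda\chi_R F + [\Delta,\chi_R] w
\end{equation*}
in $\cD'([0,T]\x\R^d)$, with the right-hand side now living globally in a workable negative-regularity space. Second, apply Duhamel together with Kato-type local smoothing for the Schr\"odinger group, which provides a half-derivative spatial gain in $L^2_tL^2_x$ and upgrades $\chi_R w$ to just enough regularity to make the $L^2$ pairing licit. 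Within this framework one tests the resulting integral equation, invokes \eqref{CH} on the principal term $\Im\int\overline{\chi_R w}\,\chi_R F$, and controls the commutator $[\Delta,\chi_R]w$ and the remainders by quantitative sublinear estimates — roughly, bounds of the form $\norm[|f|\log|f|^2]_{L^2(B_R)}\cleq \norm[f]_{L^2(B_{2R})}\bigl(1+\log(1+\norm[f]_{H^s})\bigr)$ — so as to close a Gronwall (or Osgood) inequality for $\norm[\chi_R w(t)]_{L^2}^2$ uniform in $R$. Sending $R\to\infty$ then delivers $w\equiv 0$.

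The main obstacle is the simultaneous management of the cutoff commutator and the non-Lipschitz nonlinearity at subcritical regularity. Because $[\Delta,\chi_R]$ is first order, its image on $w\in H^s$ with $s<1$ lies only in $H^{s-1}$, and must be absorbed by the half-derivative gain from local smoothing in a \emph{quantitative} way; any loss here would destroy the Gronwall closure. Equally, the sublinear nature of $F$ — harmless in the Cazenave--Haraux identity itself but ruinous in any Lipschitz-type bound — must be exploited exclusively through the imaginary-part inequality \eqref{CH} at precisely the right step, while the auxiliary $L^2_tL^2_x$ control of $F$ required to run the smoothing machinery must not introduce a log-divergence incompatible with Osgood's modulus. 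Delicately balancing these two competing requirements is, I expect, the heart of the argument.
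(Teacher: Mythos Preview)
Your overall architecture --- localize by $\chi_R$, pass to Duhamel, use local smoothing for the commutator $[\Delta,\chi_R]w$, apply \eqref{CH} to the principal term, close by Gronwall and send $R\to\infty$ --- is exactly the paper's. Two concrete points, however, are not yet right.

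First, the sublinear bound you propose is false: take $f$ equal to a small constant $\eps$ on a unit ball, so that $\norm[f]_{H^s}$ stays bounded while $\norm[f\log|f|^2]_{L^2}/\norm[f]_{L^2}\sim|\log\eps|\to\infty$. The estimate that actually works is $|g(z)|\le (c_1/\delta)|z|^{1-\delta}+c_2|z|\log^+|z|$ with the explicit $1/\delta$, which after H\"older gives $\norm[\varphi_R g(u)]_{L^2}\lesssim \delta^{-1}|B_R|^{\delta/2}\norm[u]_{L^2}^{1-\delta}+\norm[u]_{H^s}^{1+\delta_0}$. This is \emph{not} uniform in $R$; the paper couples the two parameters by setting $\delta=1/\log R$, so that $|B_R|^{\delta/2}\lesssim 1$ and the cross term $\int_0^t(\varphi_R F,e_R)$ is $\lesssim (\log R)/R^s\to 0$. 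Plain Gronwall then closes --- no Osgood modulus enters.

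Second, ``upgrading $\chi_R w$ to just enough regularity to make the $L^2$ pairing licit'' does not work as stated: the commutator source $\nabla\chi_R\cdot\nabla w$ lies only in $L^\infty_T H^{s-1}$, below the $L^2$ threshold required by the inhomogeneous smoothing estimate, so you cannot bootstrap $\chi_R w$ into $L^2_TH^1$ on $B_R$ and run the differential identity. The paper bypasses this in two ways. For the commutator it writes $\nabla\varphi_R\cdot\nabla u=D^{1-s}(\nabla\varphi_R\cdot D^{s}D^{-1}\nabla u)$ plus a fractional Leibniz remainder, applying smoothing to the first piece only to obtain $e_R\in L^2_TL^2(B_R)$ with norm $\lesssim R^{-s}$. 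For the pairing, it never differentiates $\norm[\varphi_R w]_{L^2}^2$ in $t$: instead it expands $\bigl\|\int_0^t U(t-\tau)\varphi_R F\,d\tau\bigr\|_{L^2}^2$ via the elementary identity $\norm[\int_0^t f]_{L^2}^2=2\Re\int_0^t(f(\tau),\int_0^\tau f)\,d\tau$ and substitutes the integral equation back inside the inner product, which produces exactly $\Im\int_0^t(\varphi_R F,\varphi_R w)$ --- the term on which \eqref{CH} acts --- plus the controllable remainder $\Im\int_0^t(\varphi_R F,e_R)$. This Ozawa-type expansion is the mechanism you are missing for getting \eqref{CH} to bite within the integral-equation framework.
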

This theorem allows us to strengthen the claim on $H^s$ solutions in Theorem \ref{thm:1.2}.
\begin{corollary}
\label{cor:1.4}
Let $s\in(0,1)$. For any $u_0\in H^s(\R^d)$, there exists a unique strong solution $u\in C(\R, H^s(\R^d))$ to \eqref{eq:1.1}.
\end{corollary}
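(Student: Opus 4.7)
The plan is to combine existence, which is already contained in Theorem \ref{thm:1.2}, with the uniqueness statement of Theorem \ref{thm:1.3}. The key conceptual point is that Theorem \ref{thm:1.3} applies to any weak $H^s$ solution (in particular to any strong one), whereas the uniqueness built into Theorem \ref{thm:1.2} only rules out competitors constructed by extension of the $H^1$-flow. Upgrading from the second form of uniqueness to the first is essentially the whole task.

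Concretely, I would fix $u_0 \in H^s(\R^d)$ and let $u \ce \Phi(u_0) \in C(\R, H^s(\R^d))$ be the strong $H^s$ solution produced by Theorem \ref{thm:1.2}. Suppose $v \in C(\R, H^s(\R^d))$ is any other strong $H^s$ solution with $v(0)=u_0$. For each $T>0$, the restriction of $v$ to $[0,T]$ lies in $C([0,T], H^s(\R^d)) \subset L^\infty([0,T], H^s(\R^d))$, so $v$ qualifies as a weak $H^s$ solution on $[0,T]$ in the sense of Definition \ref{def:1.1}; the same is true of $u$. Theorem \ref{thm:1.3} then forces $u \equiv v$ on $[0,T]$, and letting $T \to \infty$ gives $u \equiv v$ on $[0,\infty)$.

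To reach negative times I would invoke the time-reversal symmetry of \eqref{eq:1.1}: if $w$ is a strong $H^s$ solution on $\R$ with initial datum $u_0$, then $\til{w}(t,x) \ce \overline{w(-t,x)}$ is a strong $H^s$ solution with initial datum $\overline{u_0}$. Applying the forward-in-time argument of the previous paragraph to $\til u$ and $\til v$ yields $u \equiv v$ on $(-\infty, 0]$, and together with the previous step this gives uniqueness on all of $\R$.

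There is no substantive analytic obstacle in this argument; it is a short reduction. The only things one has to verify are the essentially trivial inclusion $C(\R, H^s) \subset L^\infty_{\mathrm{loc}}(\R, H^s)$, which is what makes a strong solution a legitimate input to Theorem \ref{thm:1.3}, and the time-reversal symmetry of \eqref{eq:1.1}, which is how one upgrades $[0,T]$-uniqueness to $\R$-uniqueness. All the genuine work has been done in Theorem \ref{thm:1.3}.
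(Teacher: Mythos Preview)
Your argument is correct and matches the paper's intended approach: the paper does not give a separate proof of Corollary~\ref{cor:1.4}, presenting it instead as an immediate consequence of combining the existence in Theorem~\ref{thm:1.2} with the unconditional uniqueness in Theorem~\ref{thm:1.3}. Your explicit handling of negative times via the time-reversal symmetry $w\mapsto\overline{w(-t,\cdot)}$ is a clean way to pass from the $[0,T]$-statement of Theorem~\ref{thm:1.3} to all of $\R$, and is exactly the kind of routine reduction the paper leaves implicit.
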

The main theorem corresponds to an \emph{unconditional uniqueness} claim.
This notion was introduced by Kato \cite{K95}, where uniqueness of a power-type nonlinear Schr\"odinger equation was studied based on Strichartz's estimates. In the equation \eqref{eq:1.1}, a simple application of Strichartz's estimates does not work well due to the sublinear effect of the nonlinearity. That may be why the only known way to show unconditional uniqueness was through a simple combination of the inequality \eqref{CH} and the differential equation, which requires $H^1$ solutions (see Section \ref{sec:2.1}).

For the proof of Theorem \ref{thm:1.3}, we take advantage of integral equations based on the localization argument. By using integral equations and inner products to expand the $L^2$ norm of the difference between two solutions, we can apply \eqref{CH} to nonlinear estimates. This use of integral equations is inspired by \cite{O06}. It would be of independent interest that \eqref{CH} still works even in the framework of integral equations. 
Another ingredient in the proof is that we use local smoothing estimates in \cite{KPV93} to handle errors coming from the linear term. In combination with this smoothing, we use the following quantitative estimate on the nonlinearity:
\begin{align*}
\abs[ u\log(|u|^2)]\cleq \frac{1}{\delta}|u|^{1-\delta}+|u|\log^+|u|,\quad \delta\in(0,1),
\end{align*}
where the explicit dependence $1/\delta$ on the right-hand side is the key to our argument. Then, we introduce the relation
\begin{align}
\label{eq:1.6}
\delta=\frac{1}{\log R}
\end{align}
for $\delta$ and the cutoff parameter\footnote{We use the cut-off function $\varphi_R$ with $\supp\varphi_R\subset B_R$ in a localization argument.} $R$,
which enables us to eliminate the errors in the limit $R\to\infty$. 

Next, we present the uniqueness result on the torus. In view of the framework considered in numerical simulations 
(see, e.g., \cite{BCST19siam, BCST19nm}), it would be important to consider the Cauchy problem for
\begin{align}
\label{eq:1.7}
\left\{
\begin{aligned}
&i\pt_t u+\Delta u+\lambda u\log (|u|^2)=0,
\\
& u_{\mid t=0} =u_0,
\end{aligned}
\quad(t,x)\in\R\times\T^d,~\lambda\in\R\setminus\{0\}.
\r.
\end{align}
Weak solutions of \eqref{eq:1.7} can be defined in the same way as in Definition \ref{def:1.1}, and the same result of Theorem \ref{thm:1.2} holds true on the torus as well. Our uniqueness result is as follows.
\begin{theorem}
\label{thm:1.5}
Let $T>0$ and let $s\in(0,1)$. For any $u_0\in H^s(\T^d)$, weak $H^s$ solutions of \eqref{eq:1.7} on $[0,T]$ are unique. When $d=1$, for any $u_0\in L^2(\T)$, weak $L^2$ solutions of \eqref{eq:1.7} on $[0,T]$ are unique.
\end{theorem}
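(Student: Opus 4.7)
The plan is to adapt the proof of Theorem \ref{thm:1.3} to the torus, exploiting the compactness of $\T^d$ to bypass the spatial localization argument. On $\R^d$, that localization was needed because $u\log(|u|^2)$ does not lie in $H^{s-2}(\R^d)$ globally, which forced the introduction of the cutoff $R$ and the calibration \eqref{eq:1.6}. On the torus, the pointwise bound $|u\log(|u|^2)|\cleq |u|^{1-\delta}+|u|\log^+|u|$ combined with compactness of $\T^d$ immediately places the nonlinearity in a globally integrable space, so the Duhamel integral equation
\[
u(t)=e^{it\Delta}u_0+i\lambda\int_0^t e^{i(t-\tau)\Delta}\bigl[u\log(|u|^2)\bigr](\tau)\,d\tau
\]
holds without any spatial cutoff.

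For the $H^s$ case with $s\in(0,1)$, I would first verify that each weak $H^s$ solution satisfies the Duhamel equation above in $L^2(\T^d)$, using the Sobolev embedding $H^s(\T^d)\subset L^p(\T^d)$ together with the pointwise nonlinearity bound. Then one computes $\|u(t)-v(t)\|_{L^2(\T^d)}^2$ by pairing the Duhamel difference with $u-v$; the main nonlinear contribution $2\lambda\,\Im\int_0^t \int_{\T^d} \overline{(u-v)}\bigl(u\log(|u|^2)-v\log(|v|^2)\bigr)\,dx\,d\tau$ is controlled by $2|\lambda|\int_0^t\|u-v\|_{L^2}^2\,d\tau$ via \eqref{CH}, and Gronwall's inequality yields $u\equiv v$. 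This amounts to the strategy of Theorem \ref{thm:1.3} with the cutoff parameter $R$ simply removed.

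For the $L^2$ case on $\T$ with $d=1$, the nonlinearity $u\log(|u|^2)$ need not lie in $L^2(\T)$ when $u$ is only in $L^2(\T)$, so I would substitute Bourgain's $L^4_{t,x}$ Strichartz estimate on $\T$ for the local smoothing estimates used in the $\R^d$ proof. Splitting the nonlinearity via $|u\log(|u|^2)|\cleq \frac{1}{\delta}|u|^{1-\delta}+|u|\log^+|u|$ and applying $L^4$ Strichartz to the $|u|^{1-\delta}$ piece produces error terms in $L^{q}_{t,x}$ for an exponent $q$ approaching $4$ as $\delta\to 0$; calibrating $\delta$ in the spirit of \eqref{eq:1.6} then closes a Gronwall-type estimate for $\|u-v\|_{L^2}$.

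The main obstacle is the absence of local smoothing estimates on the torus: in the $\R^d$ proof these supply the fractional derivative that converts the sublinear nonlinearity into a Lipschitz-controlled perturbation. The restriction to $d=1$ in the $L^2$ part of the theorem reflects precisely this deficit, since only in one space dimension does Bourgain's $L^4$ Strichartz estimate provide a sharp enough substitute; for $d\ge 2$ on the torus at the $L^2$ endpoint, a direct analogue of the present argument does not appear to close without additional ideas, while for $s>0$ the Sobolev regularity of the solution itself compensates for the missing smoothing.
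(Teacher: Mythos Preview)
Your treatment of the $H^s$ case with $s\in(0,1)$ is essentially the paper's argument: on the compact torus the Duhamel equation holds globally in $L^2(\T^d)$ because $g(u)\in L^\infty_TL^2$ by Lemma~\ref{lem:2.6} and Sobolev embedding, and then Lemma~\ref{lem:2.5} combined with \eqref{CH} yields $\|(u-v)(t)\|_{L^2}^2\le 4|\lambda|\int_0^t\|(u-v)(\tau)\|_{L^2}^2\,d\tau$, whence Gronwall concludes.

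Your sketch for the $L^2(\T)$ case, however, has a genuine gap. First, the inequality you invoke, $|u\log(|u|^2)|\cleq \frac{1}{\delta}|u|^{1-\delta}+|u|\log^+|u|$, bounds only $|g(u)|$ and says nothing about the \emph{difference} $g(u)-g(v)$; for uniqueness one needs the companion estimates $|g_1(z)-g_1(w)|\le \frac{c_1}{\delta}|z-w|^{1-\delta}$ and $|g_2(z)-g_2(w)|\cleq(1+\log^+|z|+\log^+|w|)|z-w|$ of Lemma~\ref{lem:3.1}, which the paper proves and applies. Second, your reference to calibrating $\delta$ ``in the spirit of \eqref{eq:1.6}'' is misplaced: there is no cutoff parameter $R$ on the torus, so the relation $\delta=1/\log R$ plays no role. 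The paper's mechanism is different. Working in the mixed space $L^\infty_tL^2\cap L^4_{t,x}$ via Zygmund's estimate \eqref{eq:1.8} and Lemma~\ref{lem:3.2}, one absorbs the $g_2$ contribution by taking $T_0$ small and arrives at
\[
\|(u-v)(t)\|_{L^2}\le \frac{C}{\delta}\int_0^t\|(u-v)(\tau)\|_{L^2}^{1-\delta}\,d\tau,\qquad t\in[0,T_0],
\]
with $C$ independent of $\delta$. The nonlinear Gronwall inequality of Lemma~\ref{lem:3.4} then gives $\|(u-v)(t)\|_{L^2}\le (CT_0)^{1/\delta}$, and sending $\delta\to 0$ (after shrinking $T_0$ so that $CT_0<1$) yields $u=v$ on $[0,T_0]$, followed by iteration. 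This Yudovich-type endgame, not a $\delta$--$R$ coupling, is the missing idea in your proposal.
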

Since the torus does not require the localization argument, the proof for $s>0$ is easier than Theorem \ref{thm:1.3}. 
Uniqueness of $L^2$ solutions is still a delicate problem even on the torus, but we can prove it when $d=1$. To control the logarithmic growth of the nonlinearity, we apply the classical estimate by Zygmund \cite{Z74}:  
\begin{align}
\label{eq:1.8}
\norm[e^{it\pt_x^2}u_0]_{L^4([0,T]\times\T)}\cleq \norm[u_0]_{L^2(\T)}.
\end{align}
When using this estimate, we can no longer use \eqref{CH}, but instead we use the inequality
\begin{align}
\label{eq:1.9}
\bigl|z\log|z|-w\log|w| \bigr|&\cleq \frac{1}{\delta} |z-w|^{1-\delta}\quad\text{for all}~ |z|,|w|\leq 1,~\delta\in(0,1),
\end{align}
to estimate the sublinear effect of the nonlinearity.
Combining this with \eqref{eq:1.8}, we can reduce the proof to a Gronwall type lemma using $\delta$ as a parameter tending to zero.

Finally, we consider the equation perturbed by pure power nonlinearities: 
\begin{align}
\label{eq:1.10}
\l\{
\begin{aligned}
&i\pt_t u +\Delta u+ \lambda u\log(|u|^2)+\mu|u|^{\alpha}u=0,
\\
&u_{\mid t=0}=u_0,
\end{aligned}
\r.
\quad(t,x)\in\R\times\R^d,~\lambda, \mu\in\R\setminus\{0\},
\end{align}
with $0<\alpha<\frac{4}{(d-2)_+}$.
In \cite{CG18} the authors proved that when $\lambda<0$, $\mu<0$, this equation has the same universal dispersive structure as \eqref{eq:1.1} with $\lambda<0$. They constructed the solution of \eqref{eq:1.10} in the weighted Sobolev space $\{f\in H^1(\R^d) : |x|f \in L^2(\R^d) \}$, 
but uniqueness of solutions was shown only for the case $d=1$. The following result covers all cases that were left unproven regarding uniqueness, and contributes to strengthening their result.
%
\begin{theorem}
\label{thm:1.6}
Let $T>0$. For any $u_0\in H^1(\R^d)$, weak $H^1$ solutions of \eqref{eq:1.10} on $[0,T]$ are unique.
\end{theorem}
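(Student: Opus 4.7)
The plan is to combine the Cazenave--Haraux $L^2$-inner-product argument that controls the logarithmic nonlinearity via \eqref{CH} with the Strichartz--Hölder bootstrap that handles the pure power nonlinearity in the energy-subcritical regime $\alpha<4/(d-2)_+$. Let $u,v\in L^\infty([0,T],H^1(\R^d))$ be two weak solutions of \eqref{eq:1.10} with $u_0=v_0$, and set $w=u-v$.

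First I would derive an $L^2$ energy identity for $w$. From the distributional equation $i\pt_t w+\Delta w=-\lambda(u\log|u|^2-v\log|v|^2)-\mu(|u|^\alpha u-|v|^\alpha v)$, the Sobolev embedding $H^1\hookrightarrow L^{\alpha+2}$ together with the pointwise estimate $|u\log|u|^2|\cleq|u|^{1-\eta}+|u|^{1+\eta}$ places each nonlinear term in a dual Sobolev space against which $\overline{w}\in L^\infty_T H^1$ can be paired. The Laplacian contribution vanishes and I obtain, for a.e.\ $t\in[0,T]$,
\[
\tfrac{1}{2}\tfrac{d}{dt}\|w(t)\|_{L^2}^2
=-\Im\int_{\R^d}\overline{w}\left[\lambda(u\log|u|^2-v\log|v|^2)+\mu(|u|^\alpha u-|v|^\alpha v)\right]dx.
\]
Inequality \eqref{CH} applied with $z\log|z|^2=2z\log|z|$ bounds the log integrand pointwise by $2|w|^2$, giving $4|\lambda|\|w\|_{L^2}^2$ for the log contribution.

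For the power contribution I would use Strichartz. Sobolev embedding places $|u|^\alpha u\in L^\infty_T L^{(\alpha+2)/(\alpha+1)}$, and combined with $u\log|u|^2\in L^\infty_T L^2$, dual Strichartz on the Duhamel form of $u$ (and $v$) shows $u,v\in L^q_{\mathrm{loc}}L^r$ for the admissible pair $(q,r)=(4(\alpha+2)/(d\alpha),\alpha+2)$. Using the pointwise bound $||u|^\alpha u-|v|^\alpha v|\cleq(|u|^\alpha+|v|^\alpha)|w|$ and Hölder in space and time,
\[
\left|\int_0^{T'}\!\!\int\overline{w}\,(|u|^\alpha u-|v|^\alpha v)\,dx\,ds\right|
\cleq (T')^{1/q_1}\left(\|u\|_{L^\infty L^r}^\alpha+\|v\|_{L^\infty L^r}^\alpha\right)\|w\|_{L^q_{T'}L^r}^2
\]
for a suitable $q_1\in(1,\infty)$ chosen from the Hölder balance. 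Combined with a direct Strichartz estimate on $w$ (with $w(0)=0$) that bounds $\|w\|_{L^q_{T'}L^r}$ by $(T')^\theta\|w\|_{L^q_{T'}L^r}$ plus a logarithmic remainder, taking $T'$ small absorbs the power term and reduces the claim to a Gronwall argument on $\|w\|_{L^\infty_{T'}L^2}$, closed by the log estimate of the first step. Iteration yields $w\equiv 0$ on $[0,T]$.

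The main obstacle is the joint treatment of the logarithmic and power nonlinearities: \eqref{CH} is intrinsically an $L^2$-inner-product inequality, whereas Strichartz naturally produces $L^qL^r$ bounds with $r\neq 2$, and the logarithm admits no clean $L^{r'}$ control by $\|w\|_{L^r}$ because of its Hölder behaviour near $0$ and sublinear growth at infinity. My strategy of reserving the $L^2$ energy identity for the logarithm and Strichartz for the power requires a careful interpolation---via Gagliardo--Nirenberg and the uniform $H^1$ bound---to fuse the two estimates into a single integral inequality for $\|w(t)\|_{L^2}$ that Gronwall can close.
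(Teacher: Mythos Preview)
Your strategy has a genuine gap at precisely the point you flag as the ``main obstacle''; the proposed resolution via Gagliardo--Nirenberg does not close it. The underlying issue is that the sublinear piece of the logarithm cannot be placed in any global Lebesgue or dual Sobolev space on $\R^d$. Your justification of the $L^2$ energy identity asserts that $|u\log|u|^2|\cleq|u|^{1-\eta}+|u|^{1+\eta}$ puts the logarithmic term in a dual space (and later you even write $u\log|u|^2\in L^\infty_TL^2$), but for $u\in H^1(\R^d)$ the function $|u|^{1-\eta}$ lies only in $L^{2/(1-\eta)}$, an exponent strictly \emph{greater} than $2$; it belongs to no $L^p(\R^d)$ with $p\le 2$, hence $u\log|u|^2\notin H^{-1}(\R^d)$ in general and the pairing $\langle i\partial_t w,w\rangle_{H^{-1},H^1}$ is unavailable. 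This is exactly why the paper insists on the local formulation \eqref{eq:1.3}. The same obstruction kills your Strichartz step on $w$: the only difference estimate for the part of $g$ near zero is $|g_1(u)-g_1(v)|\cleq\delta^{-1}|w|^{1-\delta}$ (Lemma~\ref{lem:3.1}), and $\||w|^{1-\delta}\|_{L^2(\R^d)}=\|w\|_{L^{2(1-\delta)}}^{1-\delta}$ is uncontrolled on the whole space. If instead you bypass Strichartz and interpolate the power term directly via $\|w\|_{L^{\alpha+2}}\cleq\|w\|_{L^2}^{1-\theta}\|\nabla w\|_{L^2}^\theta$, you land on a differential inequality of the form $y'\cleq y+y^{1-\theta}$ with $y(0)=0$, which admits the nontrivial solution $y=(\theta t)^{1/\theta}$ and therefore does \emph{not} force $y\equiv 0$.

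The paper's proof never uses \eqref{CH} here and avoids the differential identity altogether. It works with the localized Duhamel formula for $\varphi_R(u-v)$ and Strichartz estimates (Lemma~\ref{lem:3.3}). On $B_R$, H\"older gives $\|\varphi_R|w|^{1-\delta}\|_{L^2}\le|B_R|^{\delta/2}\|\varphi_R w\|_{L^2}^{1-\delta}$, and the decisive idea is to couple the two parameters by $\delta=1/\log R$, which keeps $|B_R|^{\delta/2}\cleq 1$. This produces the inequality \eqref{eq:3.15}, to which the nonlinear Gronwall Lemma~\ref{lem:3.4} applies, giving $\|\varphi_R w(t)\|_{L^2}\le\bigl((C_1/R)^{1/\log R}+C_3T_0\bigr)^{\log R}$. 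Since $(C_1/R)^{1/\log R}\to e^{-1}$, choosing $T_0$ so small that $e^{-1}+C_3T_0<1$ makes the right-hand side vanish as $R\to\infty$. Neither the localization nor the coupling $\delta=1/\log R$ appears in your outline, and without them the argument does not close.
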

For the proof of Theorem \ref{thm:1.6}, we estimate the difference of two solutions by combining Strichartz's estimates and \eqref{eq:1.9}, based on the localization argument. Similarly to Theorem \ref{thm:1.3}, by setting the relation \eqref{eq:1.6} for $\delta$ in \eqref{eq:1.9} and the cutoff parameter $R$, we can eliminate the errors in the limit $R\to\infty$.
\\

To the best of the author's knowledge, our arguments in this paper are the first to effectively apply dispersive/smoothing estimates of the Schr\"odinger group to logarithmic Schr\"odinger equations. We expect that the strategy is robust and can be applied to other types of logarithmic evolution equations. We close this introduction by emphasizing that we have provided new results on the uniqueness problem for logarithmic Schr\"odinger equations, the topic on which no essential progress has been made for a long time since \cite{CH80}.

\subsection{Organization of the paper}

The rest of the paper is organized as follows. In Section \ref{sec:2.1} we review the previous uniqueness result of weak $H^1$ solutions. Then, we organize the localization argument for $H^s$ solutions in Section \ref{sec:2.2}, and prove Theorem \ref{thm:1.3} in Section \ref{sec:2.3}. 
In Section \ref{sec:3.1} we prepare algebraic inequalities for logarithmic nonlinearities and recall smoothing estimates of the Schr\"odinger group. In Section \ref{sec:3.2} we study uniqueness for \eqref{eq:1.7} and prove Theorem \ref{thm:1.5}. In Section \ref{sec:3.3} we study uniqueness for \eqref{eq:1.10} and prove Theorem \ref{thm:1.6}.

\subsection*{Notation}
We often use the abbreviated notation such as
\begin{align*}
L^\infty_TX=L^\infty([0,T],X),\quad T>0
\end{align*}
for a Banach space $X$. 
We write the Duhamel term by
\begin{align*}
\Phi[f](t) =\int_0^t U(t-\tau)f(\tau)d\tau,\quad U(t)=e^{it\Delta}
\end{align*}
for a time-dependent function $f(t)$.
The Fourier transform on the whole space is defined by
\begin{align*}
\hat{f}(\xi) =\int_{\R^d} f(x) e^{-2\pi i\xi\cdot x}dx,\quad \xi\in\R^d.
\end{align*}
We define the fractional derivative by
\begin{align*}
(D^sf)(x)= \int_{\R^d}(2\pi |\xi|)^s\hat{f}(\xi)e^{2\pi i \xi\cdot x} d\xi,\quad x\in\R^d,~s\in\R.
\end{align*}

We use $A\cleq B$ to denote the inequality $A\le CB$ for some constant $C>0$. The dependence of $C$ 
is usually clear from the context and we often omit this dependence. 
We sometimes use $A\cleq_* B$ to clarify the dependence of a constant.


\section{Uniqueness in $H^s(\R^d)$}

In this section we study uniqueness of weak $H^s$ solutions to \eqref{eq:1.1}.

\subsection{Review on the previous result}
\label{sec:2.1}

We first review the uniqueness result in \cite{CH80}. 
Let $u,v$ be weak $H^1$ solutions of \eqref{eq:1.1} on $[0,T]$. 
Take a function $\varphi\in C^{\infty}_c(\R^d)$ satisfying
\begin{align*}
\varphi (x)=
\left\{
\begin{aligned}
&1&&\text{if}~|x|\leq 1/2,
\\
&0 &&\text{if}~|x|\geq 1,
\end{aligned}
\r.
\qquad 0\leq\varphi (x)\leq 1\quad\text{for all}~x\in\R^d.
\end{align*}
We set the cut-off function 
\begin{align}
\label{eq:2.1}
\varphi_R(x)=\varphi(x/R),\quad x\in\R^d,~R>0.
\end{align}
We note that weak $H^1$-solutions $u$ and $v$ satisfy \eqref{eq:1.3} with $s=1$. Using this fact and the inequality \eqref{CH}, we get
\begin{align*}
\frac{d}{dt}\norm[\varphi_R(u-v)]_{L^2}^2
&=2\Im\tbra[i\pt_t(u-v),\varphi_R^2(u-v)]_{H^{-1}(B_R), H^1_0(B_R)}
\\
&=
\begin{aligned}[t]
&2\Im\rbra[\nabla(u-v),\nabla(\varphi_R^2)(u-v) ]_{L^2}
\\
&{}-4\lambda\Im\rbra[u\log |u|-v\log|v| ,
\varphi_R^2(u-v)]_{L^2}
\end{aligned}
\\
&\leq 
\frac{C(M)}{R}+4\abs[\lambda]\norm[\varphi_R(u-v)]_{L^2}^2.
\end{align*}
Applying Gronwall's lemma, we obtain
\begin{align*}
\norm[\varphi_R(u-v)(t)]_{L^2}^2\leq e^{4|\lambda| t} \l(\norm[ (u-v)(0)]_{L^2}^2+\frac{C(M)}{R}T \r)
\end{align*}
for all $t\in[0,T]$. Therefore, by applying Fatou's lemma, we obtain
\begin{align}
\label{eq:2.2}
\norm[(u-v)(t)]_{L^2}^2
\leq\liminf_{R\to\infty}\norm[\varphi_R(u-v)(t)]_{L^2}^2\leq
e^{4|\lambda| t} \norm[ (u-v)(0)]_{L^2}^2,
\end{align}
which yields the $L^2$ Lipschitz flow on $[0,T]$. 
In particular, this implies that 
if  $u(0)=v(0)$, then $u=v$ on $[0,T]$. 
\begin{remark}
\label{rem:2.1}
The final estimate \eqref{eq:2.2} is meaningful for $L^2$ solutions, but we need to use $H^1$ solutions for this derivation. As mentioned in Section \ref{sec:1.1}, we need to consider the differential equation in a local sense, and it can be seen that $H^1$ estimates are used to control the error that arises from the localization argument.
\end{remark}

\subsection{Localization argument}
\label{sec:2.2}

We now study uniqueness of weak $H^s$ solutions of \eqref{eq:1.1} for $s\in(0,1)$.
We begin with the following lemma.
\begin{lemma}
\label{lem:2.2}
Let $s\in[0,1]$ and let $T>0$. Assume that $u\in L^\infty([0,T], H^s(\R^d))$ 
satisfies \eqref{eq:1.2} with $I=[0,T]$.
Then, $u\in C_w([0,T], H^s(\R^d))$ and in particular $u(0)$ has the meaning as an element of $H^s$.
\end{lemma}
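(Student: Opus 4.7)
The plan is to extract time regularity of $u$ locally in space from the distributional equation \eqref{eq:1.2}, and then upgrade it through the uniform $L^\infty_T H^s$ bound to weak continuity in $H^s$. As recorded in \eqref{eq:1.3}, the hypothesis already tells us that for every $R>0$ the equation holds in $H^{s-2}(B_R)$ for almost every $t$; what remains is to convert this pointwise-in-$t$ information into genuine continuity.

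First, I would show $\pt_t u \in L^\infty([0,T], H^{s-2}(B_R))$ for every $R>0$. The linear part $\Delta u$ trivially belongs to $L^\infty_T H^{s-2}(\R^d)$. For the nonlinearity, I combine the elementary bound $\bigl|z\log(|z|^2)\bigr|\cleq |z|^{1-\theta}+|z|^{1+\theta}$ (any $\theta\in(0,1)$, constants depending on $\theta$) with the Sobolev embedding $H^s(\R^d)\hookrightarrow L^q(\R^d)$ for suitable $q=q(s,d)$; for $\theta$ small enough this places $u\log(|u|^2)$ in $L^\infty([0,T], L^p(B_R))$ with $p>1$ satisfying $L^p(B_R)\hookrightarrow H^{s-2}(B_R)$ (the dual of $H^{2-s}(B_R)\hookrightarrow L^{p'}(B_R)$). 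A standard Bochner fundamental-theorem-of-calculus argument then produces, after modification on a $t$-null set, a representative in $C([0,T], H^{s-2}(B_R))$ for every $R$; in particular $t\mapsto\langle u(t),\phi\rangle$ is continuous for each $\phi\in C^\infty_c(\R^d)$.

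Weak continuity in $H^s(\R^d)$ now follows from a standard $\varepsilon/3$ argument using density of $C^\infty_c(\R^d)$ in $H^{-s}(\R^d)$: given $\phi\in H^{-s}$ and approximants $\phi_n\in C^\infty_c$ with $\phi_n\to\phi$ in $H^{-s}$,
\[
\bigl|\langle u(t)-u(t_0),\phi\rangle\bigr|\leq \bigl|\langle u(t)-u(t_0),\phi_n\rangle\bigr|+2\|u\|_{L^\infty_T H^s}\|\phi-\phi_n\|_{H^{-s}},
\]
where the first term vanishes as $t\to t_0$ by the previous step and the second is uniformly small in $t$. Hence $u\in C_w([0,T], H^s(\R^d))$, and in particular $u(0)$ makes sense as an element of $H^s(\R^d)$. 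The main obstacle is the local integrability step: one must coordinate $\theta$ with the Sobolev/duality exponents so that both $|u|^{1-\theta}$ and $|u|^{1+\theta}$ fit into a common local $L^p(B_R)$ that embeds into $H^{s-2}(B_R)$. Locality in $x$ absorbs the sublinear piece and the small-$\theta$ flexibility handles the superlinear one, uniformly across the full range $s\in[0,1]$.
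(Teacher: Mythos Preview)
Your argument is correct and follows essentially the same route as the paper: establish $u\in W^{1,\infty}([0,T],H^{s-2}(B_R))$ from the equation, deduce a continuous-in-time local representative, test against $C^\infty_c$ functions, and upgrade via density and the uniform $H^s$ bound. The only cosmetic difference is that the paper inserts an interpolation step to $C([0,T],H^{s-\eps}(B_R))$ before pairing with test functions, whereas you pair directly in the $H^{s-2}(B_R)$--$H^{2-s}_0(B_R)$ duality; your route is slightly more direct and equally valid, and your explicit discussion of the Sobolev/duality exponents for the nonlinear term fills in what the paper leaves as ``one can easily prove.''
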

\begin{proof}
From the assumption one can easily prove that for any $R>0$,
\begin{align}
\label{eq:2.3}
u\in W^{1,\infty}([0,T], H^{s-2}(B_R))
\end{align} 
and 
\begin{align}
\label{eq:2.4}
i\pt_t u+\Delta u +\lambda u\log(|u|^2)=0\quad\text{in}~H^{s-2}(B_R)
\end{align}
for almost all (a.a.)~${t\in[0,T]}$.
It follows from \eqref{eq:2.3} that $u\in C([0,T], H^{s-2}(B_R))$, and the interpolation implies that $u\in C([0,T], H^{s-\eps}(B_R))$ for any small $\eps>0$. 
Thus, for any $\psi\in C^\infty_c(\R^d)$ it follows from the continuity property of $u$ in time that the function
\begin{align*}
t\mapsto \int_{\R^d} u(t)\psi dx
\end{align*}
is continuous. Therefore, from $u\in L^\infty([0,T],H^s(\R^d))$ and a density argument, we conclude that $u\in C_w([0,T], H^s(\R^d))$. 
\end{proof}
Let $u\in L^\infty([0,T], H^s(\R^d))$ be a weak $H^s$ solution of \eqref{eq:1.1}. Since $u$ satisfies \eqref{eq:2.4}, we deduce that
\begin{align*}
i\pt_t (\varphi_R u)+\Delta(\varphi_R u)-2\nabla u\cdot\nabla\varphi_R-u\Delta\varphi_R+\lambda\varphi_Ru\log(|u|^2)=0\quad\text{in}~H^{s-2}(\R^d)
\end{align*}
for a.a. $t\in[0,T]$. By Duhamel's formula, we have
\begin{align}
\label{eq:2.5}
\varphi_Ru(t)=
\begin{aligned}[t]
U(t)\varphi_Ru_0&-i\int_0^tU(t-\tau)\l(2\nabla\varphi_R\cdot\nabla u(\tau) +\Delta\varphi_R u(\tau)\r)d\tau
\\
&{}~+i\int_0^t U(t-\tau)\varphi_Rg(u(\tau))d\tau, \quad t\in [0,T],
\end{aligned}
\end{align}
where we set 
\begin{align}
\label{eq:2.6}
g(u)=\lambda u\log(|u|^2).
\end{align}
To estimate the second term on the right-hand side (RHS) of \eqref{eq:2.5}, we use the following local smoothing estimate.
\begin{lemma}[Local smoothing estimates]
\label{lem:2.3}
Let $s\in(0,1]$. Then, we have
\begin{align*}
\norm[ {D^s\Phi[\chi_{B_R}f]}]_{L^2([0,T]\times B_R)}\cleq R^s\norm[\chi_{B_R}f]_{L^2([0,T]\times\R^d)}
\end{align*}
for all $f\in L^2([0,T]\times\R^d)$.
\end{lemma}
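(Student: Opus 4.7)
The plan is to derive this inhomogeneous bound from the classical homogeneous Kato-type local smoothing estimate of Kenig--Ponce--Vega,
\begin{align*}
\norm[D^{1/2}U(t)u_0]_{L^2([0,T]\times B_R)}\cleq R^{1/2}\norm[u_0]_{L^2(\R^d)},
\end{align*}
via a $TT^{*}$/duality argument at the endpoint $s=1$, and then to recover the full range $s\in(0,1]$ by interpolation with the trivial $s=0$ case.

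First I would dualize the homogeneous estimate to obtain
\begin{align*}
\norm[\int_0^T U(-\tau)\chi_{B_R}D^{1/2}g(\tau)\,d\tau]_{L^2(\R^d)}\cleq R^{1/2}\norm[g]_{L^2([0,T]\times B_R)}.
\end{align*}
Setting $Tu_0\ce D^{1/2}U(t)u_0$, the operator $TT^{*}$ is precisely
\begin{align*}
g\mapsto D\int_0^T U(s-\tau)\chi_{B_R}g(\tau)\,d\tau,
\end{align*}
and composing the bounds on $T$ and $T^{*}$ yields
\begin{align*}
\norm[D\int_0^T U(s-\tau)\chi_{B_R}g(\tau)\,d\tau]_{L^2_sL^2(B_R)}\cleq R\norm[g]_{L^2_tL^2(B_R)},
\end{align*}
that is, the non-retarded analogue of the target inequality at $s=1$. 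The passage from the non-retarded integral over $[0,T]$ to the retarded one $\Phi[\chi_{B_R}g](t)=\int_0^t U(t-\tau)\chi_{B_R}g(\tau)\,d\tau$ would then be carried out by a Christ--Kiselev-type argument adapted to the Schr\"odinger Duhamel term, or equivalently by appealing directly to the inhomogeneous smoothing estimate in \cite{KPV93}. This completes the $s=1$ endpoint.

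The $s=0$ endpoint is immediate from Minkowski's inequality and the $L^2$-unitarity of $U$:
\begin{align*}
\norm[\Phi[\chi_{B_R}f]]_{L^2([0,T]\times B_R)}\leq \norm[\Phi[\chi_{B_R}f]]_{L^2_TL^2(\R^d)}\cleq T\norm[\chi_{B_R}f]_{L^2([0,T]\times\R^d)}.
\end{align*}
Complex interpolation applied to the analytic operator family $z\mapsto D^z\Phi[\chi_{B_R}\,\cdot\,]$ between $\Re z=0$ and $\Re z=1$ then delivers the bound $\cleq R^s$ for every $s\in(0,1)$.

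The main subtlety is the $s=1$ endpoint: the homogeneous smoothing supplies only $D^{1/2}$ at the cost of an $R^{1/2}$ factor, so the $TT^{*}$ composition has to be set up so that the exponents of $D$ and of $R$ each sum to $1$; moreover, the switch from the non-retarded to the retarded Duhamel operator is not covered by the classical Christ--Kiselev lemma, whose hypothesis $p<q$ fails here since both sides live in $L^2$ in time. Once this step is handled, the remaining interpolation is routine.
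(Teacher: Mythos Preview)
Your proposal is correct and, once you fall back on the direct citation to \cite{KPV93} for the retarded $s=1$ inhomogeneous estimate (as you do), it coincides exactly with the paper's proof: quote \cite{KPV93} for $s=1$, use the trivial energy bound at $s=0$, and apply Stein's complex interpolation. The $TT^{*}$ detour you sketch is therefore unnecessary---and, as you yourself flag, the Christ--Kiselev transfer from the non-retarded to the retarded operator genuinely fails at the $L^2_t\to L^2_t$ endpoint, so that route does not close without invoking \cite{KPV93} anyway.
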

\begin{proof}
When $s=1$, this estimate corresponds to the local smoothing effect for the inhomogeneous case in \cite{KPV93}. Since the estimate is trivial for $s=0$, by Stein's interpolation theorem \cite{S56}, the result follows for $s\in(0,1)$.
\end{proof}
\begin{lemma}
\label{lem:2.4}
Let $s\in(0,1)$ and let $u\in L^\infty([0,T],H^s(\R^d))$. Then, 
\begin{align*}
\norm[ {\Phi[\nabla\varphi_R\cdot\nabla u]} ]_{L^2([0,T]\times B_R)}\cleq
\l(R^{-s}+o(R^{-1}) \r)T^{1/2}\norm[u]_{L^\infty_TH^s(\R^d)}
\end{align*}
as $R\to\infty$. 
\end{lemma}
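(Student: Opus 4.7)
The plan is to integrate by parts in space, writing
\[\nabla\varphi_R\cdot\nabla u=\nabla\cdot(u\nabla\varphi_R)-u\Delta\varphi_R,\]
so that, since the Schr\"odinger propagator $U(t)$ commutes with spatial derivatives,
\[\Phi[\nabla\varphi_R\cdot\nabla u]=\nabla\cdot\Phi[u\nabla\varphi_R]-\Phi[u\Delta\varphi_R].\]
The second, lower-order piece is handled at once: the pointwise bound $\norm[\Delta\varphi_R]_{L^\infty}\cleq R^{-2}$ gives $\norm[u\Delta\varphi_R]_{L^\infty_T L^2}\cleq R^{-2}\norm[u]_{L^\infty_T H^s}$, and a direct Duhamel estimate places $\Phi[u\Delta\varphi_R]$ in $L^2_T L^2(B_R)$ with size comparable to $R^{-2}\norm[u]_{L^\infty_T H^s}$ up to polynomial factors of $T$. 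Since $R^{-2}/R^{-1}\to 0$ as $R\to\infty$, this contribution fits into the $o(R^{-1})T^{1/2}$ part of the target bound.

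The main work is the divergence term $\nabla\cdot\Phi[u\nabla\varphi_R]$, which must carry the $R^{-s}T^{1/2}\norm[u]_{L^\infty_T H^s}$ factor. A naive application of Lemma~\ref{lem:2.3} with $s=1$ to $u\nabla\varphi_R$ (supported in $B_R$) only yields a bound of order $R\cdot R^{-1}T^{1/2}\norm[u]_{L^\infty_T H^s}$, constant in $R$ and useless. To extract the $R^{-s}$ decay I plan to exploit the $H^s$-regularity of $u$ via a Littlewood--Paley high/low split $u=P_{\le R}u+P_{>R}u$ at the frequency scale $N\sim R$. Bernstein's inequalities give
\[\norm[P_{>R}u]_{L^\infty_T L^2}\cleq R^{-s}\norm[u]_{L^\infty_T H^s},\qquad \norm[\nabla P_{\le R}u]_{L^\infty_T L^2}\cleq R^{1-s}\norm[u]_{L^\infty_T H^s}.\]
Applying Lemma~\ref{lem:2.3} (with $s=1$) to $\Phi[(P_{>R}u)\nabla\varphi_R]$, whose integrand is still supported in $B_R$, then gives $R\cdot R^{-1-s}T^{1/2}\norm[u]_{L^\infty_T H^s}=R^{-s}T^{1/2}\norm[u]_{L^\infty_T H^s}$; a direct Duhamel bound on $\Phi[\nabla\varphi_R\cdot\nabla(P_{\le R}u)]$, which is now a genuine $L^2$ function of size $R^{-s}\norm[u]_{L^\infty_T H^s}$, yields a matching contribution.

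The main obstacle is precisely this $R^{-s}$ gain in the divergence term: neither the smallness of $\nabla\varphi_R$ alone nor Lemma~\ref{lem:2.3} alone produces any $R$-decay, and one is forced to use the two together, balanced at the frequency scale $N\sim R$, in order to convert the $H^s$-regularity of $u$ into the quantitative $R^{-s}$ decay. Once this balance is in place the remaining estimates are routine, and any extra polynomial $T$-factors from direct Duhamel bounds can be absorbed into the $o(R^{-1})$ term without affecting the leading behaviour.
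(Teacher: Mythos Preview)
Your approach is correct and reaches the desired $R^{-s}$ decay, but it is genuinely different from the paper's proof. The paper does not integrate by parts or use a Littlewood--Paley decomposition. Instead it writes $\nabla u = D^{1-s}D^{s}\tilde u$ with $\tilde u = D^{-1}\nabla u$, and commutes the outer $D^{1-s}$ past $\nabla\varphi_R$:
\[
\nabla\varphi_R\cdot\nabla u
= D^{1-s}\bigl(\nabla\varphi_R\cdot D^{s}\tilde u\bigr)
-\bigl[D^{1-s}(\nabla\varphi_R\cdot D^{s}\tilde u)-\nabla\varphi_R\cdot D^{1-s}D^{s}\tilde u\bigr].
\]
The first term is handled by Lemma~\ref{lem:2.3} at exponent $1-s$, giving exactly $R^{1-s}\cdot R^{-1}=R^{-s}$; the commutator is estimated by the Kato--Ponce fractional Leibniz rule and contributes $R^{-2+s}=o(R^{-1})$. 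Your argument replaces the Kato--Ponce commutator by Bernstein's inequality together with a frequency split at scale $N\sim R$, and uses Lemma~\ref{lem:2.3} only at $s=1$ on the high-frequency piece. This is more elementary (no fractional product rule), at the cost of doing two separate estimates where the paper does one. Both arguments ultimately balance the $R^{-1}$ from $\nabla\varphi_R$ against a gain of $R^{1-s}$ coming from local smoothing; you realise this balance through the Littlewood--Paley cut, the paper through the factorisation $\nabla=D^{1-s}D^{s}D^{-1}\nabla$.

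Two small remarks on your write-up. First, for the high-frequency piece you need $\norm[\partial_j\Phi[g_j]]_{L^2([0,T]\times B_R)}$, not $\norm[D\Phi[g_j]]_{L^2([0,T]\times B_R)}$; since $|\partial_j h|\le|\nabla h|$ pointwise and the $s=1$ case of Lemma~\ref{lem:2.3} is precisely the KPV inhomogeneous local smoothing for $\nabla$, this is harmless, but it is worth saying. Second, your closing sentence is slightly off: the extra $T$-factor from the direct Duhamel bound on the low-frequency piece sits on the \emph{leading} $R^{-s}$ term, not on the $o(R^{-1})$ remainder, so it cannot be ``absorbed'' there. This does not affect the application (the implicit constant in the lemma is allowed to depend on $T$, and only $R\to\infty$ matters downstream), but you should state the conclusion as $C(T)R^{-s}+o(R^{-1})$ rather than claim the sharp $T^{1/2}$ in front of $R^{-s}$.
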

\begin{proof}
We rewrite the term $\nabla\varphi_R\cdot\nabla u$ as 
\begin{align}
\nabla\varphi_R\cdot\nabla u
&=\nabla\varphi_R\cdot D^{1-s}D^sD^{-1}\nabla u
\notag\\
&=D^{1-s} (\nabla\varphi_R\cdot D^s\til{u} )-\bigl[ D^{1-s}\l(\nabla\varphi_R\cdot D^s\til{u}\r)-\nabla\varphi_R \cdot D^{1-s}D^s\til{u}\bigr],
\label{eq:2.7}
\end{align}
where we set 
\begin{align*}
\til{u}(x)=(D^{-1}\nabla u)(x) =
\int_{\R^d}\frac{i\xi}{|\xi|}\hat{u}(\xi) e^{2\pi i\xi\cdot x}d\xi.
\end{align*}
Applying the first term on the RHS of \eqref{eq:2.7} to Lemma \ref{lem:2.3}, we obtain
\begin{align*}
\norm[ {\Phi [D^{1-s}(\nabla\varphi_R\cdot D^s\til{u}) ] } ]_{L^2([0,T]\times B_R)}
&\cleq R^{1-s}\norm[\nabla\varphi_R\cdot D^s\til{u}]_{L^2([0,T]\times\R^d)}
\\
&\cleq R^{-s}\norm[D^su]_{L^2_TL^2(\R^d)}.
\end{align*}
For the second term on the RHS of \eqref{eq:2.7}, we use 
the fractional Leibniz rule
\begin{align*}
\norm[D^\alpha(fg)-fD^\alpha g]_{L^2(\R^d)}\cleq \norm[D^\alpha f]_{L^\infty(\R^d)}\norm[g]_{L^2(\R^d)},\quad\alpha\in(0,1),
\end{align*}
see \cite{KPV93cpam, L19}.
Then, we deduce that
\begin{align*}
&\norm[
   {\Phi[ D^{1-s} (\nabla\varphi_R\cdot D^s\til{u})-\nabla\varphi_R\cdot D^{1-s}D^s\til{u}] }  ]_{L^2([0,T]\times B_R)} 
\\
\cleq&{}~T^{1/2}\norm[D^{1-s} (\nabla\varphi_R\cdot D^s\til{u})-\nabla\varphi_R\cdot D^{1-s}D^s\til{u} ]_{L^\infty_TL^2(\R^d)}
\\
\cleq&{}~T^{1/2}\norm[D^{1-s}\nabla\varphi_R]_{L^\infty(\R^d)}\norm[D^su]_{L^\infty_TL^2(\R^d)}\cleq T^{1/2}R^{-2+s}\norm[D^su]_{L^\infty_TL^2(\R^d)}.
\end{align*}
Gathering these estimates, we get the result.
\end{proof}

\subsection{Proof of unconditional uniqueness}
\label{sec:2.3}
We begin with the following simple lemma, which is the basis for the argument using integral equations in \cite{O06}.
\begin{lemma}
\label{lem:2.5}
Let $f\in L^2([0,T], L^2)$. Then,
\begin{align*}
\norm[\int_0^tf(\tau)d\tau]_{L^2}^2
=2\Re\int_0^t\rbra[f(\tau),\int_0^{\tau}f(\tau^{\prime})d\tau^{\prime}]_{L^2}d\tau
\end{align*}
for a.a. $t\in[0,T]$.
\end{lemma}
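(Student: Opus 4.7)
The plan is to expand $\norm[F(t)]_{L^2}^2$, where $F(t)\ce\int_0^t f(\tau)d\tau$, as a double integral over $[0,t]^2$ and to exploit the reflection symmetry across the diagonal. Viewing $f$ as a jointly measurable function on $[0,T]\x\R^d$ through the isometry $L^2([0,T],L^2(\R^d))\cong L^2([0,T]\x\R^d)$, I note that $F(t)$ is a well-defined element of $L^2(\R^d)$ for each $t\in[0,T]$, because $f\in L^2([0,T],L^2)\subset L^1([0,T],L^2)$ on a bounded interval.

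First I would justify Fubini's theorem. Cauchy--Schwarz in the $\tau$-variable yields
\begin{align*}
\int_{\R^d}\l(\int_0^t|f(\tau,x)|\,d\tau\r)^{\!2}dx\leq t\,\norm[f]_{L^2([0,T],L^2)}^2<\infty,
\end{align*}
so the kernel $f(\tau,x)\overline{f(\tau^{\prime},x)}$ is absolutely integrable on $[0,t]^2\x\R^d$, and Fubini gives
\begin{align*}
\norm[F(t)]_{L^2}^2=\int_{\R^d}\int_0^t\int_0^t f(\tau,x)\overline{f(\tau^{\prime},x)}\,d\tau^{\prime}\,d\tau\,dx.
\end{align*}
Next I would split $[0,t]^2$ into the two triangles $\{\tau^{\prime}<\tau\}$ and $\{\tau^{\prime}>\tau\}$ (the diagonal having Lebesgue measure zero). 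Interchanging the names $\tau\leftrightarrow\tau^{\prime}$ shows that the integral over the upper triangle is the complex conjugate of the integral over the lower triangle, so their sum equals twice the real part of the lower-triangle contribution. Recognizing the inner integral over $\tau^{\prime}$ as $\overline{F(\tau,x)}$ then produces
\begin{align*}
\norm[F(t)]_{L^2}^2=2\Re\int_0^t\rbra[f(\tau),F(\tau)]_{L^2}d\tau,
\end{align*}
which holds for every $t\in[0,T]$, in particular for a.a. $t$.

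The argument is essentially bookkeeping, and the only step requiring genuine care is the Fubini justification above; there is no real obstacle beyond that. As an alternative route, I could first establish the identity for smooth $f$ by the fundamental theorem of calculus applied to the $C^1$ scalar function $\tau\mapsto\norm[F(\tau)]_{L^2}^2$, whose derivative equals $2\Re\rbra[f(\tau),F(\tau)]_{L^2}$, and then pass to the limit by density of smooth functions in $L^2([0,T],L^2)$, noting that both sides of the claimed identity are continuous under this approximation.
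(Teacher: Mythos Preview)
Your proof is correct and follows essentially the same route as the paper: expand the squared norm as a double integral over $[0,t]^2$, split into the two triangles, and use the swap $\tau\leftrightarrow\tau'$ to recognize one triangle as the complex conjugate of the other. You supply more detail on the Fubini justification than the paper does, and your alternative density argument is a bonus, but neither constitutes a different strategy.
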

\begin{proof}
We provide a proof for completeness. We first note that
\begin{align*}
\norm[\int_0^tf(\tau)d\tau]_{L^2}^2=\iint_{[0,t]\times[0,t]}\rbra[f(\tau),f(\tau^{\prime})]_{L^2}d\tau d\tau^{\prime}.
\end{align*}
Next, dividing the rectangular $[0,t]\times[0,t]$ into two triangles and using the Fubini-Tonelli Theorem, the RHS can be rewritten as
\begin{align*}
&\iint_{0\le\tau\le\tau^{\prime}\le t}\rbra[f(\tau),f(\tau^{\prime})]_{L^2}d\tau d\tau^{\prime}
+\iint_{0\le\tau^{\prime}\le\tau\le t}\rbra[f(\tau),f(\tau^{\prime})]_{L^2}d\tau d\tau^{\prime}
\\
=&\int_0^t\rbra[\int_0^{\tau^{\prime}}f(\tau)d\tau,f(\tau^{\prime})]_{L^2}d\tau^{\prime}
+\int_0^t\rbra[f(\tau),\int_0^{\tau}f(\tau^{\prime})d\tau^{\prime}]_{L^2}d\tau
\\
=&2\Re\int_0^t\rbra[f(\tau),\int_0^{\tau}f(\tau^{\prime})d\tau^{\prime}]_{L^2}d\tau,
\end{align*}
which completes the proof.
\end{proof}
The following result follows from Lemma \ref{lem:3.1}, which we will prove in Section \ref{sec:3}.
\begin{lemma}
\label{lem:2.6}
There exist $c_1,c_2>0$ such that for any $\delta\in(0,1)$ and any $z\in\C$,
\begin{align*}
\abs[g(z)]\leq \frac{c_1}{\delta}|z|^{1-\delta}+c_2|z|\log^+|z|.
\end{align*}
\end{lemma}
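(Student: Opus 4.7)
The statement decouples naturally into two regimes according to whether $|z|\ge 1$ (where the logarithm is nonnegative and well-behaved) or $|z|<1$ (where the logarithm is singular but mollified by the factor $|z|$). My plan is to handle these separately, with no interaction between the two terms on the right-hand side.

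For $|z|\ge 1$, I simply observe that $|g(z)|=2|\lambda|\,|z|\log|z|=2|\lambda|\,|z|\log^+|z|$, so this range is absorbed entirely by the second term provided $c_2\ge 2|\lambda|$. Crucially, no $\delta$-dependence enters in this regime, and the first term on the right-hand side is not needed here.

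The substantive content is the regime $0<|z|<1$, where $\log^+|z|=0$ and I must establish
\begin{align*}
2|\lambda|\,|z|\bigl(-\log|z|\bigr)\le \frac{c_1}{\delta}|z|^{1-\delta}.
\end{align*}
Setting $r=|z|\in(0,1)$ and dividing through by $r^{1-\delta}$, this reduces to the purely one-dimensional inequality $r^\delta(-\log r)\le c_1/(2|\lambda|\delta)$ for all $r\in(0,1)$. A direct calculus optimization on $f(r)=r^\delta(-\log r)$ gives $f'(r)=r^{\delta-1}(-\delta\log r-1)$, vanishing at $r=e^{-1/\delta}$, with maximum value $f(e^{-1/\delta})=1/(e\delta)$. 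Therefore $|g(z)|\le (2|\lambda|/e)\,\delta^{-1}|z|^{1-\delta}$ on $\{|z|<1\}$, and taking $c_1:=2|\lambda|/e$ closes the argument.

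There is no real obstacle: the proof is a one-line elementary calculus optimization. The only point worth emphasizing — and indeed the reason the lemma is stated in this quantitative form — is the explicit $1/\delta$ factor. This is not a wasteful constant but the sharp blow-up rate as $\delta\to 0$, reflecting that $r^\delta(-\log r)$ genuinely attains order $1/\delta$ at its maximum $r=e^{-1/\delta}$. This precise dependence is what the subsequent argument will pair with the coupling $\delta=1/\log R$ from \eqref{eq:1.6} to annihilate the errors in the $R\to\infty$ limit.
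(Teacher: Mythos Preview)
Your proof is correct and is essentially the same as the paper's: the paper derives Lemma~\ref{lem:2.6} from the more general difference estimate Lemma~\ref{lem:3.1}, whose proof rests on exactly the same one-variable bound $\sup_{0<r<1} r^{\delta}(-\log r)\le C/\delta$ (the paper obtains it via $e^{s\delta}\ge 1+s\delta$ rather than direct optimization, but this is the same computation). Your direct argument simply specializes that estimate to $w=0$ and yields the sharp constant $1/(e\delta)$.
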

We are now in a position to prove Theorem \ref{thm:1.3}.
\begin{proof}[Proof of Theorem \ref{thm:1.3}]
Given initial data $u_0\in H^s(\R^d)$ for $s\in(0,1)$. Let $u,v$
be two weak $H^s$ solutions of \eqref{eq:1.1} on $[0,T]$.
Our goal is to prove that $u=v$ on $[0,T]$. 

We set 
\begin{align*}
M\ce \max\left\{ \norm[u]_{L^\infty_TH^s},  \norm[v]_{L^\infty_TH^s}\r\}.
\end{align*}
From the integral equation \eqref{eq:2.5}, we have
\begin{align}
\label{eq:2.8}
\varphi_R(u-v)(t)
=e_R(t)+i\int_0^tU(t-\tau)\varphi_R (g(u)-g(v) )(\tau) d\tau,\quad t\in[0,T],
\end{align}
where $e_R(t)$ is defined by
\begin{align}
\label{eq:2.9}
e_R(t)={-}i\int_0^tU(t-\tau)\bigl(2\nabla\varphi_R\cdot\nabla(u-v)(\tau)+\Delta\varphi_R (u-v)(\tau)\bigr)d\tau.
\end{align}
We note that by Lemma \ref{lem:2.4},
\begin{align}
\label{eq:2.10}
\norm[e_R]_{L^2_TL^2(B_R)}\cleq_M \frac{T^{1/2}}{R^s}+o\l(\frac{1}{R}\r).
\end{align}
 
For the nonlinearity, it follows from Lemma \ref{lem:2.6} that 
\begin{align*}
\abs[g(u)]\cleq \frac{1}{\delta}|u|^{1-\delta}+|u|^{1+\delta_0},
\end{align*}
where $\delta\in(0,1)$ is treated as a parameter and $\delta_0$ is fixed as satisfying $\delta_0\in(0,\frac{2s}{(d-2s)}_+)$. By H\"older's inequality and Sobolev's embedding, we obtain
\begin{align}
\label{eq:2.11}
\norm[\varphi_Rg(u)]_{L^2}
\cleq
\frac{1}{\delta}|B_R|^{\delta/2}\norm[u]_{L^2}^{1-\delta}+\norm[u]_{H^s}^{1+\delta_0}.
\end{align}
Applying Lemma \ref{lem:2.5}, we deduce that
\begin{align*}
&\norm[\int_0^tU(t-\tau)\varphi_R(g(u)-g(v)) (\tau)d\tau]_{L^2}^2
\\
={}&2\Im\int_0^t
\rbra[\varphi_R(g(u)-g(v))(\tau), i\int_0^\tau U(\tau-\tau')\varphi_R(g(u)-g(v))(\tau')d\tau']_{L^2}d\tau
\\
={}&
\begin{aligned}[t]
&2\Im\int_0^t\Bigl(\varphi_R(g(u)-g(v) )(\tau), \varphi_R(u-v)(\tau)\Bigr)_{L^2}d\tau
\\
&{}-2\Im\int_0^t\Bigl(\varphi_R(g(u)-g(v))(\tau), e_R(\tau)\Bigr)_{L^2}d\tau,
\end{aligned}
\end{align*}
where we used \eqref{eq:2.8} in the last equality.
Applying the inequality \eqref{CH}, we obtain
\begin{align*}
\Im\int_0^t\Bigl(\varphi_R(g(u)-g(v))(\tau), \varphi_R(u-v)(\tau)\Bigr)_{L^2}d\tau
\le 2\abs[\lambda]\int_0^t\norm[\varphi_R(u-v)(\tau)]_{L^2}^2d\tau.
\end{align*}
By \eqref{eq:2.10} and \eqref{eq:2.11}, we obtain
\begin{align*}
\abs[\int_0^t\Bigl(\varphi_R(g(u)-g(v))(\tau), e_R(\tau)\Bigr)_{L^2}d\tau]
&\cleq_M \frac{T^{1/2}|B_R|^{\delta/2}}{\delta}\norm[e_R]_{L^2_TL^2(B_R)}
\cleq_M \frac{|B_R|^{\delta/2}}{\delta R^s}T.
\end{align*}
Gathering these estimates, we deduce that
\begin{align}
\label{eq:2.12}
\norm[\int_0^tU(t-\tau)\varphi_R(g(u)-g(v))(\tau)d\tau]_{L^2}^2
\cleq&\int_0^t\norm[\varphi_R(u-v)(\tau)]_{L^2}^2d\tau+\frac{|B_R|^{\delta/2}}{\delta R^s}T.
\end{align}

So far, we have two independent parameters $\delta$ and $R$, but here we set {$\delta=\frac{1}{\log R}$}.
We note that this yields that $|B_R|^{\delta/2}\cleq1$. Here we set
\begin{align*}
y_R(t)=\norm[\varphi_R(u-v)(t)]_{L^2}^2,\quad z_R(t)=\int_0^ty_R(\tau)d\tau\quad\text{for}~ t\in[0,T],~R>0.
\end{align*}
Taking the $L^2(B_R)$ norm on both sides of \eqref{eq:2.8}, we obtain from \eqref{eq:2.10} and \eqref{eq:2.12} that
\begin{align*}
y_R(t)&=
\norm[\varphi_R(u-v)(t)]_{L^2(B_R)}^2
\\
&\cleq \norm[e_R(t)]_{L^2(B_R)}^2+\frac{\log R}{R^s}T+z_R(t).
\end{align*}
By integrating both sides over $[0,t]$ and using \eqref{eq:2.10}, we get
\begin{align*}
z_R(t)
&\cleq \frac{\log R}{R^s}T^2+\int_0^tz_R(\tau)d\tau
\end{align*}
for large $R>0$ and $t\in[0,T]$. Applying Gronwall's lemma, we obtain
\begin{align*}
z_R(T)=\int_0^T\norm[\varphi_R(u-v)(\tau)]_{L^2}^2
\leq \frac{\log R}{R^s}CT^2\exp(CT),
\end{align*}
where $C>0$ is a constant independent of $R$.
Therefore, passing to the limit $R\to\infty$, we conclude that $u=v$ on $[0,T]$.
\end{proof}
\begin{remark}
\label{rem:2.7}
When $s=0$, it follows from Lemma \ref{lem:2.3} that 
\begin{align*}
\norm[e_R]_{L^2([0,T]\times B_R)}=o(1)\quad\text{as}~R\to\infty,
\end{align*}
however which is not enough to conclude uniqueness in $L^\infty([0,T],L^2(\R^d))$ in our proof.
Unconditional uniqueness of $L^2$ solutions to \eqref{eq:1.1} has been an open problem since \cite{CH80} (see \cite[Remark 9.3.7]{C03}).
\end{remark}

\section{Further results}
\label{sec:3}

In this section we study uniqueness for \eqref{eq:1.7} and uniqueness for \eqref{eq:1.10}.
\subsection{Preliminaries}
\label{sec:3.1}
We take a function $\theta\in C^{1}_c(\C,\R)$ satisfying
\begin{align*}
\theta (z)=
\left\{
\begin{aligned}
&1&&\text{if}~|z|\leq 1/2,
\\
&0 &&\text{if}~|z|\geq 1,
\end{aligned}
\r.
\qquad 0\leq\theta (z)\leq 1\quad\text{for all}~z\in\C.
\end{align*}
We decompose the nonlinearity \eqref{eq:2.6} as
\begin{align*}
g_1(u)=\theta(u)g(u), \quad g_2(u)=(1-\theta(u))g(u).
\end{align*}
Then, we have the following.
\begin{lemma}
\label{lem:3.1}
There exist $c_1,c_2>0$ such that for any $\delta\in(0,1)$ and any $z,w\in\C$,
\begin{align}
\label{eq:3.1}
\abs[g_1(z)-g_1(w)]&\leq \frac{c_1}{\delta} |z-w|^{1-\delta},
\\
\label{eq:3.2}
\abs[g_2(z)-g_2(w)]&\leq c_2\l(1+\log^+|z|+\log^+|w|\r)|z-w|.
\end{align}
\end{lemma}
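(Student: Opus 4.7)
The plan is to prove the two inequalities separately, since the two pieces of $g$ capture complementary behaviors: $g_1$ is a bounded function concentrated near the origin, where the logarithmic nonlinearity is sublinear and only Hölder continuous, while $g_2$ is smooth and bounded away from zero but grows at infinity.

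For \eqref{eq:3.2}, the key step is a pointwise gradient bound
\[
|\nabla g_2(\xi)| \leq C(1 + \log^+|\xi|) \quad\text{for all } \xi \in \mathbb{C}.
\]
Since $g_2 \equiv 0$ on $\{|\xi| \leq 1/2\}$, this is trivial there. Elsewhere, applying the product rule to $g_2 = (1-\theta)\cdot\lambda\xi\log|\xi|^2$ and using $|\nabla\theta| \leq C$ (supported on $\{1/2 \leq |\xi| \leq 1\}$, where $\xi\log|\xi|^2$ is bounded) together with $|\nabla(\xi\log|\xi|^2)| \leq C(1 + |\log|\xi||)$ yields the claim. Estimate \eqref{eq:3.2} then follows by integrating along the straight segment $\xi(t) = (1-t)w + tz$ and bounding $\log^+|(1-t)w + tz| \leq \log 2 + \log^+|z| + \log^+|w|$.

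For \eqref{eq:3.1}, the dependence on $\delta$ is the subtle point. When $|z-w| \geq 1/2$, the bound is immediate from the boundedness of $g_1$: $|g_1(z) - g_1(w)| \leq 2\|g_1\|_\infty \leq C \leq (c_1/\delta)|z-w|^{1-\delta}$. When $|z-w| < 1/2$, I may restrict to $|z|, |w| \leq 2$ (otherwise both $z, w$ lie outside $\{|\cdot| \leq 1\}$ and $g_1$ vanishes at both). Writing $F(z) = \lambda z\log|z|^2$ and decomposing
\[
g_1(z) - g_1(w) = \theta(z)[F(z) - F(w)] + [\theta(z) - \theta(w)]F(w),
\]
the second summand is controlled using the Lipschitz continuity of $\theta$ and the boundedness of $F$ on $\{|w| \leq 2\}$, giving a contribution of order $|z-w| \leq |z-w|^{1-\delta}$. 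The crux is the elementary Hölder estimate
\[
|F(z) - F(w)| \leq \frac{C}{\delta}|z-w|^{1-\delta} \quad\text{for } |z|, |w| \leq 2.
\]

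I would prove this core estimate by splitting on the relative size of $|z-w|$ and $\max(|z|,|w|)$. If $|z-w| \geq \max(|z|,|w|)/2$, then both $|z|, |w| \leq 2|z-w|$, and I bound $|F(z)|$ and $|F(w)|$ individually using the scalar bound $r|\log r| \leq (C/\delta)r^{1-\delta}$ on $r \in (0,2]$. Otherwise (WLOG $|z| \geq |w|$), we have $|w| > |z|/2$, so $|z|$ and $|w|$ are comparable; I then use
\[
F(z) - F(w) = \lambda(z-w)\log|z|^2 + \lambda w\log(|z|^2/|w|^2),
\]
bounding the first piece via $|\log|z|^2|\cdot|z-w|^\delta \leq C/\delta$ (a consequence of $|z-w| \leq |z|$ together with $r^\delta|\log r| \leq C/\delta$ on $(0,2]$) and the second piece by $C|z-w|$ via $\log(1+x) \leq x$ applied to $x = (|z|-|w|)/|w|$. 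The main obstacle is precisely this bookkeeping of the $1/\delta$ factor, which traces back to the sharp scalar estimate $r^\delta|\log r| \leq 1/(e\delta)$ on $(0,1]$; once this is isolated, the remaining case analysis is routine.
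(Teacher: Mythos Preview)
Your proof is correct and follows essentially the same route as the paper: both reduce \eqref{eq:3.1} to the H\"older estimate $|F(z)-F(w)|\leq (C/\delta)|z-w|^{1-\delta}$ for bounded $z,w$, split $F(z)-F(w)$ into a $(z-w)\log|\cdot|$ piece and a $w\log(|z|/|w|)$ piece, and control them via the scalar bound $r^\delta|\log r|\leq C/\delta$ and $\log(1+x)\leq x$ respectively. Your reduction (handling the cutoff $\theta$ explicitly) and your case split on the relative size of $|z-w|$ are more detailed than the paper's argument, which simply asserts sufficiency of the core estimate for $|z|,|w|\leq 1$ and treats all such $z,w$ in one stroke; the extra cases you introduce are harmless but not needed, since $|z-w|\leq 2\max(|z|,|w|)$ already gives $|z-w|^\delta|\log|z||\lesssim |z|^\delta|\log|z||\leq C/\delta$ without separating the regime $|z-w|\geq \max(|z|,|w|)/2$.
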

\begin{proof}
Since \eqref{eq:3.2} follows immediately, we only prove \eqref{eq:3.1}. To prove this, it is sufficient to show the inequality
\begin{align}
\label{eq:3.3}
\abs[g(z)-g(w)]&\leq \frac{c}{\delta} |z-w|^{1-\delta}\quad\text{for}~ |z|,|w|\leq 1,
\end{align}
where $c>0$ is some constant independent of $\delta\in(0,1)$ and $z,w\in\C$ with $|z|,|w|\leq 1$.
We may assume $|z|\leq |w|$. We note that
\begin{align*}
\abs[z\log|z|-w\log|w|]
&=\abs[ z\left(\log|z|-\log|w|\r)+(z-w)\log|w| ]
\\
&\leq |z-w|+|z-w|^{1-\delta}|z-w|^{\delta}\abs[\log|w|].
\end{align*}
If we set $|w|=e^{-s}$ for some $s\in[0,\infty)$, then we have
\begin{align*}
|z-w|^{\delta}\abs[\log|w|] &\cleq |w|^\delta \abs[\log|w|]
=\frac{s}{e^{s\delta}}
\leq \frac{s}{1+s\delta}\leq \frac{1}{\delta}.
\end{align*}%
Therefore, we get
\begin{align*}
\abs[z\log|z|-w\log|w|] \cleq |z-w|+\frac{1}{\delta}|z-w|^{1-\delta}\cleq \frac{1}{\delta}|z-w|^{1-\delta},
\end{align*}
which implies \eqref{eq:3.3}.
\end{proof}
The inequality \eqref{eq:3.1} with some constant $c(\delta)>0$ instead of $c_1/\delta$ has been used in \cite{AC17, HO24, CHO24}, but no attention was paid to the explicit dependence of $c(\delta)$ in those studies. The key to our proof is that the constant in \eqref{eq:3.1} can be taken as $1/\delta$.

For smoothing estimates on the one-dimensional torus, the following estimate is proved by Zygmund \cite{Z74}:
\begin{align*}
\norm[e^{it\pt_x^2}u_0]_{L^4([0,T]\times\T)}\cleq \norm[u_0]_{L^2(\T)}.
\end{align*}
Combined with a duality argument \cite{GV92}, this yields the following result.
\begin{lemma}
\label{lem:3.2}
Let $(q,r)$ and $(\gamma,\rho)$ be $(\infty,2)$ or $(4,4)$. Let $I$ be a bounded interval. If $f\in L^{\gamma'}(I, L^{\rho'}(\T))$, then
\begin{align*}
\norm[ {\Phi[f]}]_{L^q(I,L^r(\T))}\cleq \norm[f]_{L^{\gamma'}(I, L^{\rho'}(\T))}.
\end{align*}
\end{lemma}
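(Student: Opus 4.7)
The plan is to derive Lemma~\ref{lem:3.2} as the standard $TT^\ast$ consequence of Zygmund's endpoint estimate, followed by a Christ--Kiselev reduction to replace the untruncated time integral by the retarded one.

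The first step is to record the two homogeneous bounds on the free propagator,
\begin{align*}
\norm[U(t)u_0]_{L^\infty(I,L^2(\T))} = \norm[u_0]_{L^2(\T)},\qquad
\norm[U(t)u_0]_{L^4(I,L^4(\T))}\cleq \norm[u_0]_{L^2(\T)},
\end{align*}
the first by $L^2$ unitarity of the Schr\"odinger group, the second being Zygmund's estimate stated just before the lemma. Together these cover the two admissible pairs $(q,r)\in\{(\infty,2),(4,4)\}$ on the homogeneous side.

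Next, I would dualize each bound. Since $U(t)^\ast=U(-t)$, the dual map
\begin{align*}
f\longmapsto \int_I U(-\tau)f(\tau)\,d\tau
\end{align*}
is bounded from $L^{\gamma'}(I,L^{\rho'}(\T))$ into $L^2(\T)$ for every admissible pair $(\gamma,\rho)\in\{(\infty,2),(4,4)\}$. Composing this $T^\ast$ bound with the direct $T$ bound, and using the semigroup identity $U(t)U(-\tau)=U(t-\tau)$, yields the untruncated inhomogeneous estimate
\begin{align*}
\norm[\int_I U(t-\tau)f(\tau)\,d\tau]_{L^q(I,L^r(\T))}\cleq \norm[f]_{L^{\gamma'}(I,L^{\rho'}(\T))}
\end{align*}
for all four combinations of admissible $(q,r)$ and $(\gamma,\rho)$.

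Finally, to pass from the integral over the full interval $I$ to the retarded integral $\Phi[f](t)=\int_0^t U(t-\tau)f(\tau)\,d\tau$, I would invoke the Christ--Kiselev lemma, whose hypothesis $q>\gamma'$ is satisfied strictly in every case at hand ($\infty>1$, $\infty>4/3$, $4>1$, $4>4/3$); the degenerate case $(q,r)=(\gamma,\rho)=(\infty,2)$ may alternatively be disposed of directly by Minkowski's inequality and $L^2$ unitarity. There is no genuine obstacle once Zygmund's $L^4$ bound is granted; the only point requiring a moment's attention is verifying the strict inequality $q>\gamma'$ so that Christ--Kiselev applies in each of the mixed cases.
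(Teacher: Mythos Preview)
Your proposal is correct and follows essentially the same route as the paper, which simply records Zygmund's $L^4$ estimate and then invokes a duality argument (citing Ginibre--Velo \cite{GV92}) without further detail. The only cosmetic difference is that you pass to the retarded integral via the Christ--Kiselev lemma, whereas the paper implicitly relies on the retarded-estimate machinery of \cite{GV92}; both are standard and interchangeable here since $q>\gamma'$ holds strictly in every case.
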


For the proof of Theorem \ref{thm:1.6}, we use Strichartz's estimates (see, e.g., \cite{S77, Y87, C03}). 
We say that a pair $(q,r)$ is admissible if 
\begin{align*}
\frac{2}{q}=d\l(\frac{1}{2} -\frac{1}{r}\r),\quad 2\le r\le \frac{2d}{(d-2)_+},
\end{align*}
with the exception $(d,q,r)=(2,2,\infty)$. 
\begin{lemma}
\label{lem:3.3}
Let $(q,r)$ and $(\gamma,\rho)$ be admissible and let $I$ be an interval. If $f\in L^{\gamma'}(I, L^{\rho'}(\R^d))$, then
\begin{align*}
\norm[ {\Phi[f]}]_{L^q(I,L^r(\R^d))}\cleq \norm[f]_{L^{\gamma'}(I, L^{\rho'}(\R^d))}.
\end{align*}
\end{lemma}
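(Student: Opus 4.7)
The plan is to prove Lemma \ref{lem:3.3} along the classical lines going back to Strichartz, Ginibre--Velo, and Keel--Tao. The starting point is the pointwise dispersive estimate for the free Schr\"odinger group,
\begin{align*}
\norm[U(t)f]_{L^\infty(\R^d)}\cleq |t|^{-d/2}\norm[f]_{L^1(\R^d)},\quad t\neq 0,
\end{align*}
which follows from the explicit Gaussian kernel of $e^{it\Delta}$. Interpolating this with the unitary bound $\norm[U(t)f]_{L^2}=\norm[f]_{L^2}$ via the Riesz--Thorin theorem yields
\begin{align*}
\norm[U(t)f]_{L^r(\R^d)}\cleq |t|^{-d(1/2-1/r)}\norm[f]_{L^{r'}(\R^d)}
\end{align*}
for every $r\in[2,\infty]$. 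This is the only analytic input needed; the rest of the argument is soft.

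Next I would derive the homogeneous Strichartz estimate $\norm[U(\cdot)u_0]_{L^q(\R,L^r(\R^d))}\cleq\norm[u_0]_{L^2(\R^d)}$ for every admissible pair $(q,r)$ by a $TT^*$ argument. Writing $Tu_0=U(\cdot)u_0$, the adjoint $T^*$ sends $f\in L^{q'}(\R,L^{r'}(\R^d))$ to $\int_\R U(-\tau)f(\tau)\,d\tau\in L^2(\R^d)$, so
\begin{align*}
TT^*f(t)=\int_\R U(t-\tau)f(\tau)\,d\tau.
\end{align*}
By the dispersive estimate and Minkowski's inequality,
\begin{align*}
\norm[TT^*f(t)]_{L^r}\cleq \int_\R |t-\tau|^{-d(1/2-1/r)}\norm[f(\tau)]_{L^{r'}}d\tau,
\end{align*}
and the admissibility condition $2/q=d(1/2-1/r)$ makes the kernel $|t|^{-2/q}$ of Hardy--Littlewood--Sobolev type, giving $\norm[TT^*f]_{L^q(\R,L^r)}\cleq\norm[f]_{L^{q'}(\R,L^{r'})}$ (the endpoint $q=2$, $r=2d/(d-2)$ for $d\geq 3$ requires the Keel--Tao bilinear argument rather than HLS, but this is standard). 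Duality yields $\norm[Tu_0]_{L^q_t L^r_x}\cleq\norm[u_0]_{L^2}$ and, bootstrapping, the dual estimate $\norm[T^*f]_{L^2}\cleq\norm[f]_{L^{q'}_tL^{r'}_x}$.

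For the inhomogeneous statement of the lemma, let $(q,r)$ and $(\gamma,\rho)$ be two admissible pairs. Combining the above bounds,
\begin{align*}
\norm[\int_\R U(t-\tau)f(\tau)\,d\tau]_{L^q(\R,L^r)}\cleq\norm[f]_{L^{\gamma'}(\R,L^{\rho'})},
\end{align*}
by writing the integral as $T(T_\gamma^* f)$ where $T_\gamma$ corresponds to the pair $(\gamma,\rho)$. Restricting the time integration to $\tau\in I$ with $\tau\leq t$, i.e.\ to the retarded kernel appearing in $\Phi[f]$, is obtained from the two-sided estimate through the Christ--Kiselev lemma; this step is valid precisely when $q'<q$, and the only excluded case $q=q'=2$ can be handled directly from the diagonal $TT^*$ bound without Christ--Kiselev. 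Finally, one restricts to the interval $I$ by extending $f$ by zero outside $I$, which does not affect the norms involved.

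The only potentially delicate point is the endpoint $(q,r)=(2,2d/(d-2))$ in dimensions $d\geq 3$, which falls outside the Hardy--Littlewood--Sobolev range and requires the bilinear interpolation argument of Keel--Tao; everything else is a routine consequence of the dispersive estimate, duality, and Christ--Kiselev. Since this material is entirely classical and available in \cite{C03}, I would simply cite it there and record only the structure of the argument above.
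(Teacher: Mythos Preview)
Your sketch is correct and follows the standard route (dispersive estimate, $TT^*$, Hardy--Littlewood--Sobolev for non-endpoint pairs, Keel--Tao at the endpoint, Christ--Kiselev for the retarded truncation). One small imprecision: the Christ--Kiselev step requires $\gamma'<q$, not $q'<q$; since $\gamma'\le 2\le q$ for admissible pairs, the only excluded configuration is the double endpoint $\gamma=q=2$, which Keel--Tao handle directly, so your conclusion is unaffected.

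As for comparison with the paper: there is nothing to compare. The paper does not prove Lemma~\ref{lem:3.3} at all; it merely records the statement and cites \cite{S77, Y87, C03} as standard references. Your proposal is thus strictly more detailed than what appears in the paper, and your concluding remark---that one would simply cite \cite{C03}---is exactly what the authors do.
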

Finally, we prepare the following Gronwall type lemma.
\begin{lemma}
\label{lem:3.4}
Let $T>0$, $a,b\ge0$, $\delta\in(0,1)$, and let $f\in L^1([0,T])$ be a nonnegative function. If 
\begin{align}
\label{eq:3.4}
f(t)\le a +\frac{b}{\delta}\int_0^tf(\tau)^{1-\delta}d\tau,\quad\text{a.a.}~ t\in[0,T],
\end{align}
then we have
\begin{align}
\label{eq:3.5}
f(t)\leq (a^\delta + bt)^{1/\delta},\quad\text{a.a.}~ t\in[0,T].
\end{align}
\end{lemma}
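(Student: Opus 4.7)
The plan is to turn the integral inequality \eqref{eq:3.4} into a differential inequality for its right-hand side and then integrate it explicitly, in the spirit of Bihari's inequality with nonlinearity $\phi(x)=x^{1-\delta}$. Fix $\varepsilon>0$ and set
\[
F_\varepsilon(t)\ce a+\varepsilon+\frac{b}{\delta}\int_0^t f(\tau)^{1-\delta}\,d\tau.
\]
Since $f\in L^1([0,T])$ is nonnegative and $1-\delta\in(0,1)$, Jensen's (or H\"older's) inequality gives $f^{1-\delta}\in L^1([0,T])$, so $F_\varepsilon$ is absolutely continuous and nondecreasing on $[0,T]$, with $F_\varepsilon(0)=a+\varepsilon$ and $F_\varepsilon'(t)=\tfrac{b}{\delta}f(t)^{1-\delta}$ for a.a.\ $t\in[0,T]$.

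By hypothesis, $f(t)\le F_\varepsilon(t)-\varepsilon\le F_\varepsilon(t)$ for a.a.\ $t$, and since $x\mapsto x^{1-\delta}$ is increasing on $[0,\infty)$, this yields $f(t)^{1-\delta}\le F_\varepsilon(t)^{1-\delta}$ a.e. Therefore
\[
F_\varepsilon'(t)\le \frac{b}{\delta}\,F_\varepsilon(t)^{1-\delta}\quad\text{for a.a.}~t\in[0,T].
\]
Because $F_\varepsilon\ge\varepsilon>0$, the function $F_\varepsilon^\delta$ is also absolutely continuous and the chain rule gives
\[
\frac{d}{dt}\bigl(F_\varepsilon(t)^{\delta}\bigr)=\delta F_\varepsilon(t)^{\delta-1}F_\varepsilon'(t)\le b.
\]
Integrating over $[0,t]$ yields $F_\varepsilon(t)^\delta\le (a+\varepsilon)^\delta+bt$, hence
\[
f(t)\le F_\varepsilon(t)\le\bigl((a+\varepsilon)^\delta+bt\bigr)^{1/\delta}\quad\text{for a.a.}~t\in[0,T].
\]
Sending $\varepsilon\to 0^+$ produces \eqref{eq:3.5}.

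The only delicate point is that $f$ is merely $L^1$, so all pointwise manipulations have to be read almost everywhere; but since the hypothesis \eqref{eq:3.4} is itself only assumed a.e., this causes no difficulty. The $\varepsilon$-regularization serves solely to keep $F_\varepsilon$ bounded away from zero, so that the negative power $F_\varepsilon^{\delta-1}$ is well defined even when $a=0$. This is a standard safeguard rather than a real obstacle, and indeed the entire argument is essentially a one-line separation of variables for the ODE $g'=\tfrac{b}{\delta}g^{1-\delta}$, whose solution with $g(0)=a$ is precisely $(a^\delta+bt)^{1/\delta}$.
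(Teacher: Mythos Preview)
Your proof is correct and follows essentially the same approach as the paper: define $F$ to be the right-hand side of the integral inequality, differentiate $F^\delta$, use $f\le F$ and monotonicity of $x\mapsto x^{1-\delta}$ to get $(F^\delta)'\le b$, and integrate. The only difference is your $\varepsilon$-regularization to guarantee $F_\varepsilon>0$ when $a=0$; the paper omits this safeguard and writes the one-line computation directly, so your version is, if anything, slightly more careful.
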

\begin{proof}
Let the function on the RHS of \eqref{eq:3.4} be $F(t)$. It follows from \eqref{eq:3.4} that
\begin{align*}
(F(t)^\delta)'=\delta F(t)^{\delta-1}\cdot\frac{b}{\delta}f(t)^{1-\delta}\leq b
\end{align*}
for a.a. $t\in[0,T]$, and this implies \eqref{eq:3.5} after integration.
\end{proof}

\subsection{Uniqueness on the torus}
\label{sec:3.2}

In this subsection, we study uniqueness for \eqref{eq:1.7}. If $u\in L^\infty([0,T], H^s(\T^d))$ is a weak $H^s$ solution of \eqref{eq:1.7}, then 
\begin{align*}
i\pt_t u +\Delta u +g(u)=0 \quad \text{in}~H^{s-2}(\T^d)
\end{align*}
for a.a. $t\in[0,T]$. By Duhamel's formula, $u$ satisfies the integral equation
\begin{align}
\label{eq:3.6}
u(t)
=U(t)u_0+i\int_0^tU(t-\tau)g(u(\tau)) d\tau,\quad t\in[0,T].
\end{align}
We are now in a position to prove Theorem \ref{thm:1.5}.
\begin{proof}[Proof of Theorem \ref{thm:1.5}]
For a given initial data $u_0\in H^s(\T^d)$, let $u,v$
be two weak $H^s$ solutions of \eqref{eq:1.7} on $[0,T]$.
We set the constant 
\begin{align*}
M= \max\left\{ \norm[u]_{L^\infty_TH^s},  \norm[v]_{L^\infty_TH^s}\r\}.
\end{align*}
It follows from \eqref{eq:3.6} that ${u-v}$ satisfies the integral equation
\begin{align}
\label{eq:3.7}
(u-v)(t)
=i\int_0^tU(t-\tau)(g(u)-g(v) )(\tau) d\tau,\quad t\in[0,T].
\end{align}

When $s>0$, uniqueness can be easily shown by following a similar procedure to the proof of Theorem 1. We first note that $g(u)\in L^\infty_TL^2$ by Lemma \ref{lem:2.6} and Sobolev's embedding. Then, applying Lemma \ref{lem:2.5} and \eqref{CH}, we deduce that
\begin{align*}
\norm[(u-v)(t)]_{L^2}^2
&=\norm[\int_0^tU(t-\tau)(g(u)-g(v)) (\tau)d\tau]_{L^2}^2
\\
&=2\Im\int_0^t\Bigl( (g(u)-g(v) )(\tau), (u-v)(\tau)\Bigr)_{L^2}d\tau
\\
&\leq 4\abs[\lambda]\int_0^t\norm[(u-v)(\tau)]_{L^2}^2d\tau
\end{align*}
for all $t\in[0,T]$. Applying Gronwall's lemma, we conclude that $u=v$ on $[0,T]$.

We now consider the case of $s=0$ and $d=1$. We introduce the function space
\begin{align*}
\scX(I)=L^\infty(I, L^2(\T))\cap L^4(I, L^4(\T)),
\end{align*}
where $I$ is a subinterval of $[0,T]$, and its norm 
\begin{align*}
\norm[u]_{\scX(I)}=\norm[u]_{L^\infty(I,L^2)}+\norm[u]_{L^4(I,L^4)}.
\end{align*}
It follows from \eqref{eq:3.1} and Lemma \ref{lem:3.2} that
\begin{align}
\label{eq:3.8}
\norm[ {\Phi[g_1(u)-g_1(v)]} ]_{\scX([0,t])}
\cleq \frac{1}{\delta}\int_0^t\norm[(u-v)(\tau)]_{L^2}^{1-\delta}d\tau
\end{align}
for $\delta\in(0,1)$ and $t\in[0,T]$. By \eqref{eq:3.2}, we have
\begin{align*}
\abs[g_2(u)-g_2(v)]&\cleq\l(|u|+|v|\r)|u-v|.
\end{align*}
Then, applying Lemma \ref{lem:3.2} and H\"older's inequality, we obtain
\begin{align*}
\norm[ {\Phi[g_2(u)-g_2(v)] } ]_{\scX([0,t])}
&\cleq
(\norm[u]_{L^\infty_t L^2}+\norm[u]_{L^\infty_t L^2})\norm[u-v]_{L^{4/3}([0,t], L^4)}
\\
&\cleq Mt^{1/2}\norm[u-v]_{L^4([0,t], L^4)}.
\end{align*}
From \eqref{eq:3.7} and these estimates, we obtain
\begin{align*}
\norm[u-v]_{\scX([0,t])}\cleq \frac{1}{\delta}\int_0^t\norm[(u-v)(\tau)]_{L^2}^{1-\delta}d\tau+Mt^{1/2}\norm[u-v]_{L^4([0,t], L^4)}.
\end{align*}
If we choose $T_0\in(0,T)$ small enough, the second term on the RHS can be absorbed into the left hand side (LHS), so that
\begin{align*}
\norm[(u-v)(t)]_{L^2}\leq \frac{C}{\delta}\int_0^t\norm[(u-v)(\tau)]_{L^2}^{1-\delta}d\tau, 
\quad t\in [0,T_0],
\end{align*}
where $C>0$ is a constant independent of $\delta\in(0,1)$.
Applying Lemma \ref{lem:3.4}, we obtain
\begin{align*}
\norm[(u-v)(t)]_{L^2}\leq \l( CT_0\r)^{1/\delta},\quad t\in[0,T_0].
\end{align*}
We rechoose $T_0$ so that $CT_0<1$, and take the limit $\delta\to0$ to obtain $u=v$ on $[0,T_0]$. We note that $T_0$ depends only on $M$, and therefore by repeating this argument a finite number of times, we conclude that $u=v$ on $[0,T]$.
\end{proof}

\begin{remark}
\label{rem:3.5}
We note that Lemma \ref{lem:3.4} was also used for the proof of uniqueness of cubic NLS on compact manifolds in \cite{BGT04ajm}, and the whole argument reducing to this Lemma was cited as Yudovitch's argument \cite{J63}. 
Although the source of the parameter $\delta$ is different, it would be of independent interest to note the similarity in the proof of uniqueness.
\end{remark}

\begin{remark}
\label{rem:3.6}
Bourgain \cite{B93} conjectured that 
\begin{align*}
2<p<2+\frac{4}{d} \implies\norm[U(t)u_0]_{L^p([0,T]\times\T^d)}\cleq
\norm[u_0]_{L^2(\T^d)}.
\end{align*}
If this estimate is proven for some ${p\in(2,2+4/d)}$, then our proof also applies to the case of $d\ge2$, and it follows that weak $L^2$ solutions of \eqref{eq:1.7} are unique.
\end{remark}

\subsection{Perturbation of pure power nonlinearities}
\label{sec:3.3}

In this subsection, we consider the equation \eqref{eq:1.10}.
Let $u\in L^\infty([0,T], H^1(\R^d))$ be a weak $H^1$ solution of \eqref{eq:1.10}. By using the cut-off function defined by \eqref{eq:2.1}, we deduce that
\begin{align*}
i\pt_t (\varphi_R u)+\Delta(\varphi_R u)-2\nabla u\cdot\nabla\varphi_R-u\Delta\varphi_R+\varphi_R\l( g(u)+h(u) \r)=0\quad\text{in}~H^{-1}(\R^d)
\end{align*}
for a.a. $t\in\R$, where we set
\begin{align*}
h(u)=\mu |u|^{\alpha}u. 
\end{align*}
It follows from Duhamel's formula that $u$ satisfies the integral equation
 \begin{align*}
\varphi_Ru(t)=
\begin{aligned}[t]
&U(t)\varphi_Ru_0-i\int_0^tU(t-\tau)\l(2\nabla\varphi_R\cdot\nabla u(\tau) +\Delta\varphi_R u(\tau)\r)d\tau
\\
&{}+i\int_0^t U(t-\tau)\varphi_Rg(u(\tau))d\tau+i\int_0^t U(t-\tau)\varphi_Rh(u(\tau))d\tau, \quad t\in [0,T].
\end{aligned}
\end{align*}
We are now in a position to prove Theorem \ref{thm:1.6}.
\begin{proof}[Proof of Theorem \ref{thm:1.6}]
For a given initial data $u_0\in H^1(\R^d)$, let $u,v$ be two weak $H^1$ solutions of \eqref{eq:1.10} on $[0,T]$.
We set 
\begin{align}
\label{eq:3.9}
M= \max\left\{ \norm[u]_{L^\infty_TH^1},  \norm[v]_{L^\infty_TH^1}\r\}.
\end{align}
It follows from the integral equation above that
\begin{align}
\label{eq:3.10}
\varphi_R(u-v)(t)=
\begin{aligned}[t]
e_R(t)&+i\int_0^tU(t-\tau)\varphi_R (g(u)-g(v) )(\tau) d\tau
\\
&{}~+i\int_0^tU(t-\tau)\varphi_R (h(u)-h(v) )(\tau) d\tau
\end{aligned}
\end{align}
for all $t\in[0,T]$, where we recall that $e_R(t)$ is defined by \eqref{eq:2.9}. We obtain from \eqref{eq:3.9} that
\begin{align}
\label{eq:3.11}
\norm[e_R]_{L^\infty_TL^2}\cleq \frac{M}{R}.
\end{align}

We will estimate the second and third terms on the RHS of \eqref{eq:3.10}.
Let $r=\alpha+2$ and let $q$ be such that $(q,r)$ is admissible. We introduce the function space
\begin{align*}
\scY(I)=L^\infty(I, L^2(\R^d))\cap L^q(I, L^r(\R^d)),
\end{align*}
where $I$ is a subinterval of $[0,T]$, and its norm 
\begin{align*}
\norm[u]_{\scY(I)}=\norm[u]_{L^\infty(I,L^2)}+\norm[u]_{L^q(I,L^r)}.
\end{align*}
Noting that 
\begin{align*}
\abs[h(u)-h(v)] \cleq (|u|^\alpha+|v|^{\alpha})\abs[u-v],
\end{align*}
we obtain from H\"older's inequality that
\begin{align*}
\norm[h(u)-h(v)]_{L^{q'}(I, L^{r'})} \cleq |I|^{\frac{1}{q'}-\frac{1}{q}}(\norm[u]_{L^\infty(I,L^r)}^\alpha+\norm[v]_{L^\infty(I,L^r)}^\alpha)\norm[u-v]_{L^q(I,L^r)}.
\end{align*}
Therefore, applying Lemma \ref{lem:3.3}, we obtain
\begin{align}
\label{eq:3.12}
\norm[ {\Phi[\varphi_R (h(u)-h(v))]} ]_{\scY([0,t])}
\cleq t^{\frac{1}{q'}-\frac{1}{q}}M^\alpha\norm[u-v]_{L^q([0,t],L^r)}.
\end{align}
From \eqref{eq:3.2} we obtain
\begin{align*}
\abs[g_2(u)-g_2(v)]&\cleq\l(|u|^\alpha+|v|^\alpha\r)|u-v|,
\end{align*}
and therefore, as in \eqref{eq:3.12}, we obtain
\begin{align}
\label{eq:3.13}
\norm[ {\Phi[\varphi_R (g_2(u)-g_2(v))]} ]_{\scY([0,t])}
\cleq t^{\frac{1}{q'}-\frac{1}{q}}M^\alpha\norm[u-v]_{L^q([0,t],L^r)}.
\end{align}
Regarding the estimate of ${g_1(u)-g_1(v)}$, we obtain from H\"older's inequality that
\begin{align*}
\norm[\varphi_R|u-v|^{1-\delta}]_{L^2}
\cleq\abs[B_R]^{\delta/2}\norm[\varphi_R(u-v)]_{L^2}^{1-\delta}.
\end{align*}
Combined with Lemma \ref{lem:3.3}, this yields that
\begin{align}
\label{eq:3.14}
\norm[ {\Phi[g_1(u)-g_1(v)]} ]_{\scY([0,t])}
\cleq \frac{\abs[B_R]^{\delta/2}}{\delta}\int_0^t\norm[\varphi_R(u-v)(\tau)]_{L^2}^{1-\delta}d\tau.
\end{align}

Now we deduce from \eqref{eq:3.11}--\eqref{eq:3.14} that
\begin{align*}
\norm[\varphi_R(u-v)]_{\scY([0,t])}\cleq_M \frac{1}{R} +\frac{|B_R|^{\delta/2}}{\delta}\int_0^t\norm[\varphi_R(u-v)(\tau)]_{L^2}^{1-\delta}d\tau+t^{\frac{1}{q'}-\frac{1}{q}}\norm[\varphi_R(u-v)]_{L^q([0,t],L^r)}
\end{align*}
for all $t\in[0,T]$. If we choose $T_0\in(0,T)$ small enough, the third term on the RHS can be absorbed into the LHS, so that
\begin{align}
\label{eq:3.15}
\norm[\varphi_R(u-v)(t)]_{L^2}\leq \frac{C_1}{R}+\frac{C_2|B_R|^{\delta/2} }{\delta}\int_0^t\norm[\varphi_R(u-v)(\tau)]_{L^2}^{1-\delta}d\tau
\end{align}
for all $t\in[0,T_0]$, where the positive constants $C_1,C_2$ are independent of parameters $\delta\in(0,1)$ and $R>0$. We now set $\delta=\frac{1}{\log R}$. Then we deduce that $|B_R|^{\delta/2}\cleq 1$ and
\begin{align*}
\lim_{R\to\infty}\l(\frac{C_1}{R}\r)^{\delta}=\lim_{R\to\infty}\l(\frac{C_1}{R}\r)^{1/\log R}=\frac{1}{e}.
\end{align*}
Applying Lemma \ref{lem:3.4} to \eqref{eq:3.15}, we obtain
\begin{align*}
\norm[\varphi_R(u-v)(t)]_{L^2}
\leq \l( \l(\frac{C_1}{R}\r)^{1/\log R} +C_3T_0 \r)^{\log R},\quad t\in[0,T_0].
\end{align*}
We rechoose $T_0$ so that $C_3T_0<1-e^{-1}$, and by taking the limit $R\to\infty$, the RHS converges to $0$. This yields that $u=v$ on $[0,T_0]$. Since $T_0$ depends only on $M$, we get the conclusion.
\end{proof}

\section*{Acknowledgments}

This work is supported by JSPS KAKENHI Grant Number JP24H00024.

\let\l\oldl


\end{document}